\newtheorem{theorem}{Theorem}
\newtheorem{lemma}[theorem]{Lemma}
\newtheorem{prop}[theorem]{Proposition}
\newtheorem{cor}[theorem]{Corollary}
\theoremstyle{definition}
\newtheorem{remark}[theorem]{Remark}
\newtheorem{exap}[theorem]{Example}
\newtheorem{ques}{Question}
\newtheorem*{proA}{Question A}
\newtheorem*{proB}{Question B}
\begin{document}

\title{The Brown-Halmos theorems on the Fock space}

\author{Jie Qin}
\address{School of Mathematics and Statistics, Chongqing Technology and Business University, 400067, China}
\email{qinjie24520@163.com}

\subjclass[2010]{30H20; 47B35}
\keywords{Toeplitz operators; Fock space; Brown-Halmos theorems; Zero-product peoblems}
\thanks{The author was supported by the National Natural
Science Foundation of China (11971125, 12071155), the Natural Science Foundation of Chongqing, China (CSTB2022NSCQ-MSX1045).}

\begin{abstract}
In this paper, we extend the Brown-Halmos theorems to the Fock space and investigate the range of the Berezin transform. We observe that there are non-pluriharmonic  functions $u$ that can be written as a finite sum $B(u)=\sum_lf_l\overline{g_l}$, where $f_l,g_l$ are holomorphic functions  belonging to the class $\mathrm{Sym}(\mathbb{C}^n)$. In addition, we solve an open question about the zero product of Toeplitz operators, which was posed by Bauer et al. in 2015. Our results reveal that the Brown-Halmos theorems on the Fock space are more complicated than that on the classical Bergman space.
\end{abstract}

% we answer the zero-product problem posed by Bauer and Le in 2011.
\maketitle

\section{Introduction}\label{s1}

Let $\mathbb{C}^n$ be the complex $n-$space and  $dV$ be the ordinary volume measure on $\mathbb{C}^n$. We denote by $d\mu$ the normalized Gaussian measure on $\mathbb{C}^n$ given by
$$d\mu(z):=\frac{1}{\pi^n}e^{-|z|^2}dV(z).$$
If $z=(z_1,\cdots,z_n)$ and $w=(w_1,\cdots,w_n)$ are points in $\mathbb{C}^n,$ we write
$$z\cdot\overline{w}=\sum_{i=1}^nz_i\overline{w_i},\quad |z|^2=z\cdot\overline{z}.$$

The Fock space $F^2$ is the space of all Gaussian square integrable entire function on $\mathbb{C}^n,$ i.e.,
$$F^2:=L^{2}(\mathbb{C}^n,d\mu)\cap H(\mathbb{C}^n).$$
Here, and elsewhere, $H(\cdot)$ denotes the class of functions holomorphic over the domain specified in the parentheses.
Clearly, $F^2$ is a closed subspace of $L^{2}(\mathbb{C}^n,d\mu)$. So, $F^2$ is a Hilbert space. There is an orthogonal projection $P$ from $L^{2}(\mathbb{C}^n,d\mu)$ onto $F^{2}$, which is given by
$$Pf(z)=\langle f(w), K(w,z)\rangle=\int_{\mathbb{C}^n} f(w) K(z,w)d\mu(w),$$
where $$K_z(w)=K(w,z)=e^{w\cdot\overline{z}}$$ is the reproducing kernel of Fock space $F^{2}.$

For a complex measurable function $f$ on $\mathbb{C}^n$ satisfying suitable growth condition at $\infty$, the Toeplitz operator $T_f$ with symbol $f$ is given by
$$T_f:=PM_f$$
where $M_f$ is the operator of point multiplication by $f$. In general, the symbol function of the Toeplitz operator is unbounded, which can result in an unbounded operator on $F^2$. For the function $f$, we define  $$\mathcal{B}[f](z)=\langle f(w)k(w,z),k(w,z)\rangle,$$
where $k(w,z)=K(w,z)/\sqrt{K(z,z)}.$  We call $\mathcal{B}[f]$ the Berezin transform of $f.$

We now define our symbol space. Given $c>0$, consider the space $\mathcal{D}_c$ of complex measure function $u$ on $\mathbb{C}^n$ such that $u(z)e^{-c|z|^2}$ is essentially bounded on $\mathbb{C}^n$. We define
$$\text{Sym}(\mathbb{C}^n):=\bigcap_{c>0} \mathcal{D}_c.$$  For a measurable  $f$ on $\mathbb{C}^n$, we  write $f\in \varepsilon(\mathbb{C}^n)$, if there is some $c>0$ such that $|f(z)|e^{-c|z|}$ is essentially bounded on $\mathbb{C}^n$. It is clear that
$$\varepsilon(\mathbb{C}^n)\subset \text{Sym}(\mathbb{C}^n)\subset \mathcal{D}_c$$
for $c>0.$  Chose $c=\frac{1}{8},$ then
$$\text{Sym}(\mathbb{C}^n)\subset \mathcal{D}_{\frac{1}{8}}\subset L^{2}(\mathbb{C}^n),d\mu).$$
 Then, for $f,g\in \text{Sym}(\mathbb{C}^n)\cap H(\mathbb{C}^n),$ one can see that  $|fg|^2e^{-\frac{1}{2}|z|^2}$ is essentially bounded on $\mathbb{C}^n$. This implies that
 \begin{equation}\label{E14}
 f\overline{g}\in L^{2}(\mathbb{C}^n,d\mu).
 \end{equation}
See \cite{Bauer1}, for more detail about the symbol space $\text{Sym}(\mathbb{C}^n)$.

In \cite{Brown}, Brown and Halmos considered the commuting Toepitz operators on the Hardy space over the unit disc, as well as characterized all triples of Toeplitz operators $(T_f,T_g,T_h)$ such that $T_fT_g=T_h$. They obtain that the product of two Toeplitz operators is zero if and only if one of them is zero. These theorems are commonly referred to as the Brown-Halmos theorems. Extending these results to the Hilbert spaces of holomorphic functions on more general domains has been one of the central themes of research in the theory of Toeplitz operators in the last few decades.

In the recent paper, Le and Tikaradze \cite{Le} provided a more complete answer to the possible $h$ so that $T_fT_g=T_h$ on the Bergman space over the unit ball under the assumption that $f,g$ are bounded  pluriharmonic and $h$ is sufficiently smooth and bounded. They showed that $T_fT_g=T_h$ if and only if $\overline{f}$ or $g$ is holomorphic. For other studies, refer to \cite{Ahern,Aleman,Axler,Choe,Choe1,Choe2,Choe3,Ding,Guo,Le,Zheng}.

On the other hand, there has not been much progress in the Brown-Halmos theorems on the Fock space. Bauer and Le \cite{Bauer1} studied Algebraic properties and the finite rank problem for Toeplitz operators on the Fock space.
 In \cite{Bauer2}, Bauer and Lee considered the commuting Toeplitz operators with  radial symbols.
 Bauer et al. \cite{Bauer} studied the commuting Toeplitz operators with $\varepsilon(\mathbb{C})-$harmonic symbols.

To date, the Brown-Halmos theorems  on the Fock space remains an open problem.
The motivation of this paper is to provide a generalization of the Brown-Halmos theorems on the Fock space.
In what follows we use the conventional multi-index notation. Thus for an $n-$tuple $i=(i_1,...,i_n)$ of non-negative integers, we write
$$|i|=\sum_{k=1}^n i_k,\quad i!=\prod_{k=1}^n i_k!,\quad z^i=z_1^{i_1}\cdots {z_n}^{i_n} ,\quad\partial^i=\partial_{1}^{i_1}\cdots \partial_{n}^{i_n} $$
where $\partial_j$ denote  partial differentiation with respect to the  $j-$th component.
 We put $$f^*(z)=\overline{f(\overline{z})}$$ for $f\in H(\mathbb{C}^n).$
First, we consider the range of the Berezin transform. Precisely, the first main result is stated as follows.
\begin{theorem}\label{TT1}
Given $f_l,g_l\in H(\mathbb{C}^n)\cap \mathrm{Sym}(\mathbb{C}^n)$ for $l=1,\cdots,N$. Suppose that  $$\mathcal{B}[u]=\sum_{l=1}^Nf_{l}\overline{g_l}\quad\text{with}\quad u\in L^{2}(\mathbb{C}^n,d\mu).$$ Then
$$u(\zeta)=\sum_{l=1}^N g_l^*(\overline{\zeta}-\partial_\zeta)f_l(\zeta).$$
\end{theorem}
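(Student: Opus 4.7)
The strategy is to reduce the theorem to an explicit identity via injectivity of the Berezin transform, and then to establish that identity first on a generating family of symbols and extend by continuity. The Berezin transform admits the standard heat-convolution representation
$$\mathcal{B}[u](z)=\frac{1}{\pi^{n}}\int_{\mathbb{C}^n}u(z+v)e^{-|v|^{2}}\,dV(v),$$
from which one reads off that $\mathcal{B}$ is injective on $L^{2}(\mathbb{C}^n,d\mu)$: the moments $\int u\,\zeta^{\alpha}\overline{\zeta}^{\beta}\,d\mu$ are recovered by differentiating $e^{|z|^{2}}\mathcal{B}[u](z)$ at the origin, and those moments determine $u$ because $\{\zeta^{\alpha}\overline{\zeta}^{\beta}\}$ is complete in $L^{2}(\mathbb{C}^n,d\mu)$. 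By linearity, the theorem follows once we prove the single-term identity
$$\mathcal{B}\bigl[g^{*}(\overline{\zeta}-\partial_{\zeta})f(\zeta)\bigr](z)\;=\;f(z)\overline{g(z)}\qquad\text{for all }f,g\in H(\mathbb{C}^n)\cap\mathrm{Sym}(\mathbb{C}^n).\qquad(\star)$$

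The first step in proving $(\star)$ is to verify it on the generating family $f(\zeta)=e^{\lambda\cdot\zeta}$, $g(\zeta)=e^{\mu\cdot\zeta}$ with $\lambda,\mu\in\mathbb{C}^{n}$. Since $g^{*}(w)=e^{\overline{\mu}\cdot w}$ and the operators $\overline{\zeta}_{j}$ and $\partial_{\zeta_{k}}$ commute for all $j,k$, the exponential factors cleanly as $g^{*}(\overline{\zeta}-\partial_{\zeta})=e^{\overline{\mu}\cdot\overline{\zeta}}\,e^{-\overline{\mu}\cdot\partial_{\zeta}}$, and the Taylor shift $e^{-\overline{\mu}\cdot\partial_{\zeta}}e^{\lambda\cdot\zeta}=e^{\lambda\cdot(\zeta-\overline{\mu})}$ gives
$$g^{*}(\overline{\zeta}-\partial_{\zeta})\,e^{\lambda\cdot\zeta}\;=\;e^{-\lambda\cdot\overline{\mu}}\,e^{\lambda\cdot\zeta+\overline{\mu}\cdot\overline{\zeta}}.$$
A direct Gaussian integration (completing the square) shows $\mathcal{B}\bigl[e^{\lambda\cdot\zeta+\overline{\mu}\cdot\overline{\zeta}}\bigr](z)=e^{\lambda\cdot\overline{\mu}}\,e^{\lambda\cdot z+\overline{\mu}\cdot\overline{z}}$, so the two scalar factors $e^{\pm\lambda\cdot\overline{\mu}}$ cancel and $(\star)$ holds on exponentials. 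Differentiating $\partial_{\lambda}^{\alpha}\partial_{\overline{\mu}}^{\beta}$ at $\lambda=\mu=0$---or, equivalently, running the direct binomial computation $\mathcal{B}\bigl[(\overline{\zeta}-\partial_{\zeta})^{\beta}\zeta^{\alpha}\bigr](z)=z^{\alpha}\overline{z}^{\beta}$ powered by the cancellation $\sum_{\gamma\leq\eta}(-1)^{|\gamma|}\binom{\eta}{\gamma}=\delta_{\eta,0}$---extends $(\star)$ to all monomials $f=\zeta^{\alpha}$, $g=\zeta^{\beta}$, and thence by linearity to all polynomials $f,g$.

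To upgrade from polynomials to $f,g\in H(\mathbb{C}^n)\cap\mathrm{Sym}(\mathbb{C}^n)$, expand $f=\sum_{\alpha}a_{\alpha}\zeta^{\alpha}$, $g=\sum_{\beta}b_{\beta}\zeta^{\beta}$ and invoke the defining growth bound of $\mathrm{Sym}$: for every $c>0$, $|f(z)|\leq C_{c}e^{c|z|^{2}}$, which by Cauchy's inequalities forces the rapid decay $|a_{\alpha}|\leq C_{\varepsilon}\varepsilon^{|\alpha|}/\sqrt{\alpha!}$ for every $\varepsilon>0$, and similarly for $b_{\beta}$. Using this decay, one checks that the double series $\sum_{\alpha,\beta}a_{\alpha}\overline{b_{\beta}}\,(\overline{\zeta}-\partial_{\zeta})^{\beta}\zeta^{\alpha}$ converges absolutely in $L^{2}(\mathbb{C}^n,d\mu)$ and that $\mathcal{B}$ may be moved inside the sum; combined with the monomial identity, this establishes $(\star)$ in full generality, and the injectivity of $\mathcal{B}$ then forces $u$ to equal the formula in the conclusion of the theorem.

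The hardest step is the convergence analysis for the symbol class $\mathrm{Sym}(\mathbb{C}^n)$: one must confirm that $g^{*}(\overline{\zeta}-\partial_{\zeta})f(\zeta)$ really defines an honest $L^{2}(\mathbb{C}^n,d\mu)$ function and that the Berezin transform commutes with the infinite series. Applying arbitrarily high-order operators $(\overline{\zeta}-\partial_{\zeta})^{\beta}$ can a priori amplify growth, so the delicate point is balancing the rapid decay of the Taylor coefficients against the combinatorial size of the binomial expansion of $(\overline{\zeta}-\partial_{\zeta})^{\beta}\zeta^{\alpha}$. Precise Cauchy-type estimates on $\mathrm{Sym}(\mathbb{C}^n)$, combined with the orthogonality of the monomials $\zeta^{\alpha}\overline{\zeta}^{\beta}$ under $d\mu$, should make the analysis go through.
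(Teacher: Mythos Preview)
Your overall strategy---reduce to the single-term identity $(\star)$ via injectivity of $\mathcal{B}$, verify $(\star)$ on exponentials, and extend---is sound and the exponential computation is correct. However, the route is genuinely different from the paper's, and the difference matters precisely at the step you flag as hardest.

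The paper does not build up from exponentials or monomials in both $f$ and $g$. Instead it first invokes Lemma~\ref{l1}, which expresses the given $u\in L^{2}(\mathbb{C}^n,d\mu)$ as an explicit inverse Fourier transform; it then expands \emph{only} $g_l$ in a Taylor series (keeping $f_l$ intact) and recognises the resulting integrals as $(-1)^{|i|}\partial_\zeta^{i}\bigl[f_l(\zeta)e^{-|\zeta|^{2}}\bigr]$, which collapses to $g_l^{*}(\overline{\zeta}-\partial_\zeta)f_l(\zeta)$. The subsequent direct verification of $(\star)$ likewise expands $g$ alone and manipulates the binomial sums. The payoff is twofold: first, $L^{2}$ membership of the right-hand side is automatic, since it has already been identified with the given $u$; second, the only series to control is the Taylor series of $g$, whose convergence is local and unproblematic. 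Your approach, by contrast, expands both $f$ and $g$ and must assemble the double series $\sum_{\alpha,\beta}a_{\alpha}\overline{b_{\beta}}(\overline{\zeta}-\partial_\zeta)^{\beta}\zeta^{\alpha}$ in $L^{2}$ from scratch.

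On that point, one concrete slip: the family $\{\zeta^{\alpha}\overline{\zeta}^{\beta}\}$ is \emph{not} orthogonal in $L^{2}(\mathbb{C}^n,d\mu)$ (for instance $\langle 1,\zeta_{1}\overline{\zeta_{1}}\rangle=1$), so you cannot appeal to orthogonality to control the partial sums. The convergence can still be salvaged---e.g.\ by passing to the Wick-ordered (complex Hermite) basis, or by estimating $\|(\overline{\zeta}-\partial_\zeta)^{\beta}\zeta^{\alpha}\|_{2}$ directly and summing against the rapid coefficient decay you correctly extract from the $\mathrm{Sym}$ condition---but as written that step is not yet a proof. If you want a shortcut, note that the paper's device of expanding only $g$ while leaving $f$ general would let you avoid the double series entirely.
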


%In particular, we obtain $u=\overline{g(z-I_1)}e^{z\cdot I_1}$ if $\mathcal{B}[u]=f\overline{g}$ where
%$f(z)=e^{z\cdot I_1}$, $g\in \mathrm{Sym}(\mathbb{C}^n)\cap H(\mathbb{C}^n)$ and $I_1=(1,\cdots,1)\in \mathbb{C}^n$. See Proposition \ref{p2}.

As an immediate corollary, we have the following direct generalization of the Brown-Halmos theorems on the Fock space.
%, which settles the  open question about the zero product of Toeplitz operators.
\begin{theorem}\label{TT2}
Suppose that $f,g,u,v\in \text{Sym}(\mathbb{C}^n)\cap H(\mathbb{C}^n)$. Write
$$\varphi=f+\overline{g}\quad\text{and}\quad\psi=u+\overline{v}.$$
(a) If $T_{\varphi}T_{\psi}=T_h$ on $F^2$ for some $h\in L^{2}(\mathbb{C}^n,d\mu),$ then
\begin{equation}\label{E11}
h=u\overline{g}+fu+\overline{g}\overline{v}+v^*({\overline{\cdot}}-\partial_\cdot) f(\cdot).
\end{equation}
(b) If $T_{\varphi}T_{\psi}=0$ on $F^2,$  then
\begin{equation*}
u\overline{g}+fu+\overline{g}\overline{v}+v^*({\overline{\cdot}}-\partial_\cdot) f(\cdot)=0.
\end{equation*}
\end{theorem}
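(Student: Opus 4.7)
The plan is to reduce Theorem~\ref{TT2} to Theorem~\ref{TT1} by computing the Berezin transform of the operator identity $T_\varphi T_\psi = T_h$ piece by piece.

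First, I would expand
$$T_\varphi T_\psi = T_f T_u + T_f T_{\overline{v}} + T_{\overline{g}} T_u + T_{\overline{g}} T_{\overline{v}}.$$
Since $f,u,g,v \in H(\mathbb{C}^n)\cap\mathrm{Sym}(\mathbb{C}^n)$, multiplication by $f$ or by $u$ preserves $F^{2}$, so $T_f T_u = T_{fu}$, and taking adjoints gives $T_{\overline g}T_{\overline v} = T_{\overline{gv}}$. For the two remaining mixed products I would use the eigen-identities
$$T_{\overline v}\, k_\zeta = \overline{v(\zeta)}\, k_\zeta, \qquad T_{\overline f}\, k_\zeta = \overline{f(\zeta)}\, k_\zeta,$$
which follow from $T_{\overline v} = T_v^*$ together with $T_v K_\zeta = v K_\zeta$. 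Writing $\mathcal{B}[T_\varphi T_\psi](\zeta) = \langle T_\psi k_\zeta,\, T_\varphi^{*} k_\zeta\rangle$ and expanding sesquilinearly yields
$$\mathcal{B}[T_\varphi T_\psi](\zeta) = f(\zeta) u(\zeta) + \mathcal{B}[u\overline{g}](\zeta) + f(\zeta)\overline{v(\zeta)} + \overline{g(\zeta) v(\zeta)},$$
where I use that holomorphic (resp.\ antiholomorphic) functions in $\mathrm{Sym}(\mathbb{C}^n)$ are fixed by $\mathcal{B}$, so the first and last summands equal $\mathcal{B}[fu]$ and $\mathcal{B}[\overline{gv}]$ respectively.

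Setting this equal to $\mathcal{B}[T_h] = \mathcal{B}[h]$ and rearranging, I obtain
$$\mathcal{B}\bigl[\, h - fu - u\overline{g} - \overline{gv}\,\bigr] = f\,\overline{v}.$$
By \eqref{E14} together with the hypothesis $h \in L^{2}(\mathbb{C}^n, d\mu)$, the bracketed function lies in $L^{2}(\mathbb{C}^n, d\mu)$, so Theorem~\ref{TT1} applied with $N=1$, $f_1=f$, $g_1=v$ yields
$$h - fu - u\overline{g} - \overline{gv} = v^{*}(\overline{\cdot} - \partial_\cdot)\, f(\cdot),$$
which is exactly \eqref{E11}. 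Part~(b) then follows by taking $h=0$ in part~(a).

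I expect the only real work to be clean bookkeeping of the four Berezin pieces; in particular, the term $\mathcal{B}[T_{\overline g} T_u]$ must be recognised as the genuine Berezin transform $\mathcal{B}[u\overline{g}]$ rather than the pointwise product $u\overline{g}$, since $T_{\overline g} T_u$ is not itself a Toeplitz operator. It is precisely this surviving non-trivial Berezin term that prevents one from finishing by mere injectivity of $\mathcal{B}$ on $L^{2}$ and forces the use of the explicit inversion formula supplied by Theorem~\ref{TT1}.
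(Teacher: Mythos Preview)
Your proof is correct and follows essentially the same route as the paper: compute the Berezin transform of $T_\varphi T_\psi$, isolate the one term that does not reduce to a pointwise product, rearrange into $\mathcal{B}[h-fu-u\overline g-\overline{gv}]=f\overline v$, and invoke Theorem~\ref{TT1} to invert the Berezin transform; part~(b) is then the special case $h=0$.

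One small inaccuracy in your commentary (harmless for the argument itself): $T_{\overline g}T_u$ \emph{is} a Toeplitz operator, namely $T_{u\overline g}$, because $T_u$ acts as multiplication by the entire function $u$ on $F^2$ and so $T_{\overline g}T_u=PM_{\overline g}M_u=T_{u\overline g}$. The point you correctly identify is that $\mathcal{B}[T_{u\overline g}]=\mathcal{B}[u\overline g]$ generally differs from the pointwise product $u\overline g$, so this term survives as the genuine Berezin transform and forces the appeal to Theorem~\ref{TT1}.
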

We note that we are free to choose $u$ and $g$ that satisfy the  Equation  (\ref{E11}).
From the statement (a) in Theorem \ref{TT2}, there are pluriharmonic function $\varphi$ and $\psi$  such that $$T_{\varphi}T_{\psi}=T_h,\quad h\neq\varphi\psi,$$ see Proposition \ref{l3}. Moreover, there are  non-pluriharmonic functions $\varphi$ and $\psi$ so that $T_{\varphi}T_{\psi}$ is a Toeplitz operator, see Proposition \ref{p1}.

The zero product for  general symbols is a long standing open problem in the area of Toeplitz operators, which has researcher's attempts even for the unit disc. The same problem on the Hardy/Bergman space was considered in \cite{Aleman,Brown,Choe1,Le}.
 In addition, Bauer et al.  \cite{Bauer1} raised the following question in 2015, see \cite[Theorem 4.2, p3047]{Bauer}.
%\begin{ques}\label{Q1}
%Let $f,g$ be in $\mathcal{D}_c$ for some $c < 1$ such that $T_f T_g = 0.$ Is it true that $f = 0$ or $g = 0$
%a.e.?
%\end{ques}
%In 2015, Bauer et al. have claimed  that they  unsure whether the hypothesis  $T_uT_v=T_vT_u=0$ can be relaxed to $T_uT_v=0$,
%For reader's convenience, we summarize the authors' question as follows.
\begin{ques}\label{Q2}
Let $f,g,u,v\in \varepsilon(\mathbb{C})\cap H(\mathbb{C})$. Write
$$\varphi=f+\overline{g}\quad\text{and}\quad\psi=u+\overline{v}.$$ Assume $ T_\varphi T_\psi=0$ on the Fock space. Is it true that $f = 0$ or $g = 0$?
\end{ques}

%According to the statement (b) in Theorem \ref{TT2}, we can see that the Equation (\ref{E12}) only has  trivial solution if the assertion in Question \ref{Q1} is true.
%In other word,
%$$f+\overline{g}=0\quad\text{or}\quad u+\overline{v}=0.$$ However, we have not yet obtained a non-trivial solution.
%
%Fortunately,
According to the statement (b) in Theorem \ref{TT2}, we obtain the following corollary, which in particular settles the zero product problem for Toeplitz operators with pluriharmonic symbols. The corollary says that the assertion in Question \ref{Q2} is true.  Our result here recover \cite[Theorem 4.2]{Bauer}.
\begin{cor}\label{c3}
Let $f,g,u,v\in \mathrm{Sym}(\mathbb{C}^n)\cap H(\mathbb{C}^n)$. Write
$$\varphi=f+\overline{g}\quad\text{and}\quad\psi=u+\overline{v}.$$ If $T_\varphi T_\psi=0$, then
$\varphi=0$ or $\psi=0$.
\end{cor}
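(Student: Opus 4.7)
The plan is to use Theorem \ref{TT2}(b) to convert $T_\varphi T_\psi = 0$ into an analytic identity in two independent complex variables, then exploit the rigidity of this identity to force $\varphi = 0$ or $\psi = 0$. By Theorem \ref{TT2}(b), the hypothesis gives
$$fu + u\overline{g} + \overline{g}\,\overline{v} + v^*(\overline{\zeta}-\partial_\zeta)f(\zeta) \equiv 0$$
on $\C^n$. Because every term is real-analytic in $(\zeta,\overline{\zeta})$, I may replace $\overline{\zeta}$ by an independent variable $\tau \in \C^n$. Expanding $v^*(\tau-\partial_z) = \sum_{j}\tfrac{(-1)^{|j|}}{j!}(v^*)^{(j)}(\tau)\partial^{j}$ and regrouping yields the functional equation
$$(u(z)+v^*(\tau))(f(z)+g^*(\tau))=\sum_{|j|\geq 1}\tfrac{(-1)^{|j|+1}}{j!}(v^*)^{(j)}(\tau)\,\partial^{j}f(z) \qquad(\star)$$
on $\C^n \times \C^n$.

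Suppose for contradiction that $\varphi \neq 0$ and $\psi \neq 0$. A short case analysis first eliminates the possibility that one of $f, g, u, v$ vanishes identically. For instance, if $f \equiv 0$ then $(\star)$ becomes $(u(z) + v^*(\tau))\,g^*(\tau) \equiv 0$; since $\varphi \neq 0$ rules out $g \equiv 0$, this forces $u(z) = -v^*(\tau)$ on a dense set, so $u$ and $v$ must be matching constants with $\psi = u + \overline{v} = 0$, contradicting the assumption. The cases $g, u, v \equiv 0$ are analogous, using also the companion identity obtained by applying the same derivation to the adjoint relation $T_{\overline{\psi}} T_{\overline{\varphi}} = 0$.

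With all four symbols non-trivial, view $(\star)$ as a power series identity in $\tau$. Matching the coefficient of $\tau^\beta$ for each $|\beta| \geq 1$ yields
$$u(z)\,\overline{g_{\beta}}+\overline{(gv)_{\beta}}+\tfrac{1}{\beta!}(v^*)^{(\beta)}(-\partial_z)f(z)=0,$$
and a short summation argument against $\tau^\beta/\beta!$ shows that $v^*(\tau - \partial_z) f(z)$ must be independent of $\tau$. Combined with the $\mathrm{Sym}(\C^n)$ growth restriction on $v$ and $f$, this rigidity forces $v$ to be a constant. A parallel analysis of the adjoint companion identity forces $f$ to be a constant as well. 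Substituting $v = v_0$, $f = f_0$ back into $(\star)$ produces $(u(z) + \overline{v_0})(f_0 + g^*(\tau)) \equiv 0$, which in turn forces either $u$ to be a matching constant (so $\psi = 0$) or $g$ to be a matching constant (so $\varphi = 0$); either outcome contradicts the standing assumption.

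The main obstacle is the rigidity step: showing that, for $v^*$ an entire function with $v \in \mathrm{Sym}(\C^n)$, the condition ``$v^*(\tau - \partial_z) f(z)$ is independent of $\tau$'' forces $v$ to be a constant whenever $f \not\equiv 0$. In the polynomial-symbol case this is a direct degree count in $\tau$; in the full entire-function setting one must invoke the $\mathrm{Sym}$-growth bound to rule out the exponential-type kernel solutions $f(z) = e^{\lambda \cdot z}$ with $(v^*)^{(k)}(-\lambda) = 0$ that would otherwise be compatible with the rigidity condition.
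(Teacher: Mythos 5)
Your setup is sound and matches the paper's starting point: Corollary \ref{c1} (i.e.\ Theorem \ref{TT2}(b)) gives the identity, complexification in $\overline{\zeta}$ is legitimate, and your factorization $(\star)$ of the left-hand side as the complexified product of $\varphi$ and $\psi$ is a nice observation; the endgame once $v$ is known to be constant is also fine. The problem is the central rigidity step, which is where the whole difficulty of the corollary lives, and it is not established. Matching the coefficient of $\tau^\beta$ gives $\tfrac{1}{\beta!}(v^*)^{(\beta)}(-\partial_z)f(z)=-u(z)\overline{g_\beta}-\overline{(gv)_\beta}$; this says each $\tau$-Taylor coefficient of $v^*(\tau-\partial_z)f(z)$ lies in the span of $u$ and $1$, not that it vanishes, so the assertion that $v^*(\tau-\partial_z)f(z)$ is independent of $\tau$ does not follow from any ``short summation argument'' --- independence would require every right-hand side above to be zero, which is essentially what you still have to prove. (For instance, with $f(z)=e^{z\cdot I_1}$ each such coefficient is a constant multiple of $f$, which is compatible with highly non-constant $v$; compare Proposition \ref{l3}.) Moreover, even granting $\tau$-independence, you explicitly defer the implication ``$v$ is constant'' as ``the main obstacle'' and only gesture at the $\mathrm{Sym}$-growth bound. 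So the two hardest links in the chain are both missing.

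For comparison, the paper's proof (Theorem \ref{t2}) closes exactly this gap by an operator-theoretic detour rather than coefficient matching: it sets $w=0$ in the complexified identity, recognizes $v^*(-\partial_z)f=T_{\overline{v(-z)}}f$, multiplies by $K_\eta$ and uses the translation identity $T_{K_\eta}T_{\overline{v(-z)}}=T_{\overline{v(\eta-z)}K_\eta}$ (from Corollary \ref{c2} and Proposition \ref{p2}) together with injectivity of the Berezin transform to obtain $\overline{v(\eta-z)}f(z)+u(z)\overline{g(0)}+u(z)f(z)+\overline{g(0)v(0)}=0$ for every $\eta$; differentiating this in the anti-holomorphic variable instantly yields $f\equiv 0$ or $v$ constant, and either case finishes by applying injectivity of the Berezin transform to $T_{\varphi\psi}=0$. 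If you want to salvage your route, you need a genuine argument at the coefficient level --- for example, playing the two-dimensionality of $\mathrm{span}\{u,1\}$ against the infinite family $\{(v^*)^{(\beta)}(-\partial_z)f\}_{|\beta|\ge 1}$ --- but as written the proof has a hole precisely at its decisive step.
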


Specifically, we can derive the following corollary, which is of independent interest, is the characterization of the product of the Toeplitz operators of the form
$$\sum_{l=1}^NT_{f_l} T_{\overline{g_l}}$$ where $f_l$ and $g_l$ are functions in $\mathrm{Sym}(\mathbb{C}^n)\cap H(\mathbb{C}^n)$.
\begin{cor}\label{c2}
Suppose that $f_l$ and $g_l$ are functions in $\mathrm{Sym}(\mathbb{C}^n)\cap H(\mathbb{C}^n)$ for $l=1,\cdots,N$. Then
$$\sum_{l=1}^NT_{f_l} T_{\overline{g_l}}=\sum_{l=1}^N T_{g_l^*({\overline{\cdot}}-\partial_\cdot) f_l(\cdot)}.$$
\end{cor}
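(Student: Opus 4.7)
My plan is to verify the identity by evaluating both sides on the total system of reproducing kernels $\{K_z\}_{z\in\mathbb{C}^n}$ in $F^2$; once the matrix coefficients match, equality follows by density. Write $S := \sum_l T_{f_l}T_{\overline{g_l}}$ and $\sigma := \sum_l g_l^*(\overline{\cdot}-\partial_\cdot)f_l(\cdot)$. For the left side, I would first observe that $T_{\overline{z_j}} = \partial_{\zeta_j}$ on $F^2$ (immediate from the reproducing formula, since $P(\overline{w_j}h)(\zeta) = \partial_{\zeta_j}\int h(w)e^{\zeta\cdot\overline{w}}d\mu(w) = \partial_{\zeta_j}h(\zeta)$), hence by Taylor expansion $T_{\overline{g_l}} = g_l^*(\partial_\zeta)$ for holomorphic $g_l \in \mathrm{Sym}(\mathbb{C}^n)$. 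Using $\partial_j K_z = \overline{z_j}K_z$ and the fact that $f_l K_z \in F^2$, we get $T_{f_l}T_{\overline{g_l}} K_z = \overline{g_l(z)}\, f_l K_z$, so $\langle SK_z, K_w\rangle = \sum_l \overline{g_l(z)}\, f_l(w)\, e^{w\cdot\overline{z}}$.

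For the right side, $\langle T_\sigma K_z, K_w\rangle = \int \sigma(\zeta)e^{\zeta\cdot\overline{z}+\overline{\zeta}\cdot w}d\mu(\zeta)$. Translate $\zeta = \eta+w$: completing the Gaussian square yields the prefactor $e^{w\cdot\overline{z}}$ together with an entire factor $e^{\eta\cdot(\overline{z}-\overline{w})}$, which I would absorb into the new entire function $G_l(\eta) := f_l(\eta+w)\,e^{\eta\cdot(\overline{z}-\overline{w})}$. Using the commutation $\partial_\eta(Fe^{\eta\cdot\overline{a}}) = [(\partial_\eta+\overline{a})F]e^{\eta\cdot\overline{a}}$, the operator $g_l^*(\overline{\zeta}-\partial_\zeta)$ transforms into $g_l^*(\overline{\eta}+\overline{z}-\partial_\eta)$ acting on $G_l$. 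The problem then reduces to the key lemma
$$\int_{\mathbb{C}^n} q(\overline{\eta}-\partial_\eta)G(\eta)\,d\mu(\eta) = q(0)\,G(0),$$
valid for polynomial $q$ and entire $G$ of suitable growth, which I would prove by expanding $(\overline{\eta}-\partial_\eta)^\alpha$ via the binomial theorem, using $T_{\overline{\eta}^\gamma} = \partial^\gamma$ together with the reproducing identity $\int H\,d\mu = H(0)$ for $H \in F^2$ to reduce each summand to $\partial^\alpha G(0)$, and then invoking the binomial cancellation $\sum_\beta(-1)^\beta\binom{\alpha}{\beta} = \delta_{\alpha,0}$. Applied with $q(x) = g_l^*(x+\overline{z})$ and $G = G_l$, this yields $g_l^*(\overline{z})\,G_l(0) = \overline{g_l(z)}\,f_l(w)$, matching the left side after summation and reinstatement of the prefactor $e^{w\cdot\overline{z}}$.

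Theorem \ref{TT1} offers a conceptual shortcut that I would record in parallel: the diagonal specialization $w=z$ of the calculation above gives $\mathcal{B}[S] = \sum_l f_l\overline{g_l}$, and since $\sigma \in L^2(\mathbb{C}^n, d\mu)$ (by the $\mathrm{Sym}$ growth condition, which is preserved under componentwise differentiation and multiplication by antiholomorphic polynomials), Theorem \ref{TT1} already uniquely identifies $\sigma$ as the only $L^2$ function whose Berezin transform equals $\sum_l f_l\overline{g_l}$, so once $S$ is known to equal some Toeplitz operator $T_h$ with $h \in L^2$, the identification $h = \sigma$ is automatic. The main obstacle is extending the key lemma from polynomial $q$ to entire $g_l^* \in \mathrm{Sym}$, which requires justifying termwise integration of its Taylor series against the Gaussian weight; this should be controllable by the $\mathrm{Sym}$ growth bounds via dominated convergence. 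A secondary technical point is verifying $f_lK_z \in F^2$, which follows from the estimate $|f_l(\zeta)|^2 |K_z(\zeta)|^2 e^{-|\zeta|^2} \lesssim e^{2c|\zeta|^2 + |z|^2 - |\zeta-z|^2}$ for any $c < 1/2$, integrable in $\zeta$.
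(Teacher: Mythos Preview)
Your proposal is correct, and in fact your ``conceptual shortcut'' is essentially the paper's own proof: the paper deduces Corollary~\ref{c2} from Theorem~\ref{t1} (specialized to $g=u=0$), and the relevant step there is precisely that $\mathcal{B}[T_{f_l}T_{\overline{g_l}}]=f_l\overline{g_l}=\mathcal{B}[g_l^*(\overline{\cdot}-\partial_\cdot)f_l]$ (the second equality being Theorem~\ref{l2}), after which injectivity of the Berezin transform on operators finishes the argument. Your main route, by contrast, computes the full off-diagonal pairings $\langle\,\cdot\,K_z,K_w\rangle$ and reduces everything to the clean identity $\int q(\overline{\eta}-\partial_\eta)G\,d\mu=q(0)\,G(0)$. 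This is genuinely more elementary in that it bypasses the Berezin-injectivity black box and gives equality of operators directly from agreement on the total set of kernels. The price is that your key lemma is, at the combinatorial level, exactly the binomial cancellation already carried out inside the proof of Theorem~\ref{l2} (the same $\sum_{\beta\leq\alpha}(-1)^{|\alpha-\beta|}\binom{\alpha}{\beta}=\delta_{\alpha,0}$ appears there, just packaged as a Berezin-transform computation), so you are effectively reproving that step rather than citing it. One small correction to your shortcut paragraph: Theorem~\ref{TT1} identifies the \emph{symbol} once you already know $S=T_h$ for some $h\in L^2$, which you never establish; the paper sidesteps this by invoking injectivity of the Berezin transform at the \emph{operator} level, which needs no such intermediate hypothesis.
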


For $g\in \mathrm{Sym}(\mathbb{C}^n)\cap H(\mathbb{C}^n)$,  it follows from Corollary \ref{c2} and Proposition \ref{p2} that
\begin{equation*}
T_{e^{z\cdot \overline{\eta}}}T_{\overline{g(z)}}=T_{\overline{g(z-\eta)}e^{z\cdot \overline{\eta}}}.
\end{equation*}
Assume $e^{z\cdot \overline{\eta}}\overline{g(z)}$ is the fixed point of the Berezin transform on the Fock space,
it follows from the  injectivity of the Berezin transform  that
$$g(z)=g(z-\eta).$$
This shows that $g$ is a  periodic function with period $\eta.$
Using the above fact, we can immediately obtain the results of the semi-commuting Toeplitz operators with symbols of linear exponential functions. Our result here in the single variable setting reduces to \cite[Section 4-5]{MA}.

%According to the statement (b) in Theorem \ref{TT2}, we can see that the Equation (\ref{E12}) only has trivial solution if the  assertion in Question \ref{Q1} is true. In other word, $$f+\overline{g}=0\quad\text{or}\quad
%u+\overline{v}=0$$ in (\ref{E12}). Unfortunately, we have not yet obtained a non-trivial solution.

Given $T_\varphi$ and $T_\psi$ with $\varphi,\psi\in \text{Sym}(\mathbb{C}^n)$, we denote by
$$[T_{\varphi},T_\psi]:=T_\varphi T_\psi-T_\psi T_\varphi,$$
the commutator of $T_\varphi$ and $T_\psi$. An other direct consequence of Theorem \ref{TT1} is a  improvement of the aforementioned Bauer et al.'s theorem  about commuting Toeplitz operators with harmonic symbols(the several-variable case is open, see \cite{Bauer}). Our result here recover [4, Theorem 4.15].
\begin{theorem}\label{TT3}
Given $f,g,u,v\in \text{Sym}(\mathbb{C}^n)\cap H(\mathbb{C}^n).$ then the following statements are equivalent:\\
 (a) $[T_{f+\overline{g}},T_{u+\overline{v}}]=0$.\\
 (b)  $\mathcal{B}[u\overline{g}-f\overline{v}]=u\overline{g}-f\overline{v}$.\\
 (c)  the following equation is fulfilled:
\begin{equation}\label{E16}
u\overline{g}-f\overline{v}=g^*({\overline{\cdot}}-\partial_\cdot) u(\cdot)-v^*({\overline{\cdot}}-\partial_\cdot) f(\cdot).
\end{equation}
\end{theorem}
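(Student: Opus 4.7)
My plan is to collapse the commutator into a single Toeplitz operator using Corollary~\ref{c2}, and then read off the three equivalences from that identity together with Theorem~\ref{TT1}. Expanding linearly,
$$[T_{f+\overline{g}},\,T_{u+\overline{v}}] \;=\; [T_f,T_u] \;+\; [T_f,T_{\overline{v}}] \;+\; [T_{\overline{g}},T_u] \;+\; [T_{\overline{g}},T_{\overline{v}}].$$
Because $T_aT_b=T_{ab}$ whenever $b$ is holomorphic or $a$ is antiholomorphic (applied on the invariant dense domain spanned by the reproducing kernels), the first and last commutators vanish and the cross terms simplify to $T_{\overline{g}}T_u=T_{u\overline{g}}$ and $T_{\overline{v}}T_f=T_{f\overline{v}}$. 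The two genuinely nontrivial products $T_fT_{\overline{v}}$ and $T_uT_{\overline{g}}$ are exactly the case handled by Corollary~\ref{c2} with $N=1$, giving
$$T_fT_{\overline{v}}=T_{v^*(\overline{\cdot}-\partial_\cdot)f(\cdot)},\qquad T_uT_{\overline{g}}=T_{g^*(\overline{\cdot}-\partial_\cdot)u(\cdot)}.$$
Collecting everything, $[T_{f+\overline{g}},T_{u+\overline{v}}]=T_h$ with
$$h \;=\; u\overline{g}-f\overline{v} \;+\; v^*(\overline{\cdot}-\partial_\cdot)f(\cdot) \;-\; g^*(\overline{\cdot}-\partial_\cdot)u(\cdot).$$

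The three equivalences now fall out of this single identity. Statement (c) is literally ``$h=0$,'' so (c)$\Rightarrow$(a) is immediate. For (a)$\Rightarrow$(b) I take Berezin transforms of $T_h=0$: since $\mathcal{B}[h](z)=\langle T_hk_z,k_z\rangle$, (a) forces $\mathcal{B}[h]\equiv 0$. At this point I need the companion identity
$$\mathcal{B}\bigl[g^*(\overline{\cdot}-\partial_\cdot)f(\cdot)\bigr] \;=\; f\overline{g},$$
which follows from Corollary~\ref{c2} together with the eigen-relation $T_{\overline{g}}k_z=\overline{g(z)}k_z$ (a one-line adjoint computation from $T_gK_z=gK_z$): indeed $\mathcal{B}[g^*(\overline{\cdot}-\partial_\cdot)f(\cdot)](z)=\langle T_fT_{\overline{g}}k_z,k_z\rangle=\langle \overline{g(z)}k_z,\overline{f(z)}k_z\rangle=f(z)\overline{g(z)}$. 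Substituted termwise, $\mathcal{B}[h]\equiv 0$ becomes $\mathcal{B}[u\overline{g}-f\overline{v}]=u\overline{g}-f\overline{v}$, which is (b). Finally (b)$\Rightarrow$(c) is a direct invocation of Theorem~\ref{TT1} applied to $U:=u\overline{g}-f\overline{v}$, written as the two-term sum $u\cdot\overline{g}+(-f)\cdot\overline{v}$; the $L^2$-hypothesis required by Theorem~\ref{TT1} is precisely (\ref{E14}), and its conclusion is exactly equation (\ref{E16}).

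The main (and essentially only) obstacle is the first paragraph, since the Toeplitz operators involved are unbounded in general. The right setting is to work on the invariant dense subspace spanned by $\{K_w:w\in\mathbb{C}^n\}$, on which every product $T_aT_b$ above is well defined and all the identities $T_aT_b=T_{ab}$, as well as Corollary~\ref{c2}, apply directly. Once the commutator formula $[T_{f+\overline{g}},T_{u+\overline{v}}]=T_h$ is secured on this domain, the rest is pure bookkeeping with the Berezin transform and one appeal to Theorem~\ref{TT1}.
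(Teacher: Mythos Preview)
Your proof is correct and follows essentially the same route as the paper: reduce the commutator using the Toeplitz algebra identities $T_{\overline g}T_u=T_{u\overline g}$, $T_{\overline v}T_f=T_{f\overline v}$, handle the remaining cross terms via Theorem~\ref{TT1}/Corollary~\ref{c2}, and close the cycle with the Berezin transform. The only cosmetic difference is packaging: the paper works directly with the Berezin identity (\ref{E17}) for the commutator and invokes Berezin injectivity for (b)$\Rightarrow$(a), whereas you first collapse the commutator to a single $T_h$ via Corollary~\ref{c2} and obtain (c)$\Rightarrow$(a) for free as $h=0\Rightarrow T_h=0$; the remaining implications are identical in both arguments.
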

Define
$$I_1=(1,\cdots,1)\quad I_{n-1}^*=(1,\ldots,1,0)\quad\text{and}\quad I_n=I_1-I_{n-1}^*=(0,\cdots,0,1)\in \mathbb{C}^n.$$
From Theorem \ref{TT3}, we obtain the following corollary,  which states that the conclusion in \cite[Lemma 3.4]{Bauer1}  is incorrect. It is likely that the authors neglected to restrict $n$ to 1, indicating a clerical error. We would like to alert the reader that the existence of
a holomorphic polynomial $p$ such that $$\mathcal{B}[pK_{-2\pi \mathrm{i}I_n}\overline{K_{I_n}}]=pK_{-2\pi \mathrm{i}I_n}\overline{K_{I_n}}$$
and $p$ is non-constant is a high dimensional phenomenon. So, this clerical error does not impact the primary outcome in \cite{Bauer}.
\begin{cor}\label{c4}
For $n\geq2$. Let $p$ be a non-constant holomorphic polynomial such that $$p(z)=p(z\cdot I_{n-1}^*).$$ Suppose that $f(z)=K_{-2\pi \mathrm{i}I_n}(z)$ and $g(z)=K_{I_n}(z)$. Then $T_{pf}T_{\overline{g}}=T_{pf\overline{g}}$.
\end{cor}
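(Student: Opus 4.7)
\emph{Approach.} My plan is to apply Corollary~\ref{c2} in the one-term case $N=1$ with $f_1=pf$ and $g_1=g$. This immediately yields
$$T_{pf}T_{\overline{g}}=T_{g^*(\overline{\cdot}-\partial_\cdot)(pf)(\cdot)},$$
so establishing the corollary reduces to verifying the pointwise identity
$$g^*(\overline{\zeta}-\partial_\zeta)(pf)(\zeta)=p(\zeta)f(\zeta)\overline{g(\zeta)}$$
on $\mathbb{C}^n$ at the level of symbols.

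\emph{Computation.} Unwinding the definitions, $f(\zeta)=K_{-2\pi \mathrm{i}I_n}(\zeta)=e^{2\pi \mathrm{i}\zeta_n}$, $g(\zeta)=K_{I_n}(\zeta)=e^{\zeta_n}$, and $g^*(w)=\overline{g(\overline{w})}=e^{w_n}$. In particular $g^*$ depends only on its $n$-th coordinate, so the operator $g^*(\overline{\zeta}-\partial_\zeta)$ collapses to the single-variable operator $\exp(\overline{\zeta_n}-\partial_{\zeta_n})$. The hypothesis $p(\zeta)=p(\zeta\cdot I_{n-1}^*)$ forces $\partial_{\zeta_n}p\equiv 0$, so this operator commutes with multiplication by $p$; and since $\overline{\zeta_n}$ and $\partial_{\zeta_n}$ commute as operators I may factor it as $e^{\overline{\zeta_n}}\exp(-\partial_{\zeta_n})$. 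By Taylor's theorem the shift operator $\exp(-\partial_{\zeta_n})$ acts on entire functions of $\zeta_n$ as the unit translation $h(\zeta_n)\mapsto h(\zeta_n-1)$, so applied to $e^{2\pi \mathrm{i}\zeta_n}$ it produces $e^{-2\pi \mathrm{i}}e^{2\pi \mathrm{i}\zeta_n}=e^{2\pi \mathrm{i}\zeta_n}$. Multiplying by $e^{\overline{\zeta_n}}$ and restoring the prefactor $p$ yields $p(\zeta)f(\zeta)\overline{g(\zeta)}$, which is precisely the required identity.

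\emph{Where the content lies.} There is no genuine technical obstacle once Corollary~\ref{c2} is in hand; the verification above is essentially a one-paragraph calculation. The substance of the statement is rather the isolation of two delicate coincidences without which the identity fails. First, the specific frequency $-2\pi \mathrm{i}$ appearing in $K_{-2\pi \mathrm{i}I_n}$ is essential, because the unit shift of $e^{c\zeta_n}$ introduces a factor $e^{-c}$ that cancels exactly when $c\in 2\pi \mathrm{i}\mathbb{Z}$; any other choice would leave a stray exponential and destroy the equality $T_{pf}T_{\overline{g}}=T_{pf\overline{g}}$. Second, the condition $\partial_{\zeta_n}p\equiv 0$ is needed to detach $p$ from the exponential operator, and a non-constant $p$ satisfying this exists only when $n\geq 2$. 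Together these explain why the phenomenon is genuinely multi-dimensional and why the assertion of \cite[Lemma~3.4]{Bauer1} is correct only in the single-variable case.
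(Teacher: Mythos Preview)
Your proof is correct and takes a route different from the paper's. The paper computes the Berezin transform $\mathcal{B}[pf\overline{g}]$ directly as an inner product against reproducing kernels, obtaining
\[
\mathcal{B}[pf\overline{g}](z)=p(z+I_n)\,e^{2\pi \mathrm{i}(z_n+1)}\,e^{\overline{z_n}}=p(z)f(z)\overline{g(z)}
\]
after using $p(z+I_n)=p(z)$ and $e^{2\pi \mathrm{i}}=1$, and then invokes the equivalence $T_{pf}T_{\overline{g}}=T_{pf\overline{g}}\Longleftrightarrow \mathcal{B}[pf\overline{g}]=pf\overline{g}$ recalled at the start of Section~\ref{s3}. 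You instead pull the symbol directly from Corollary~\ref{c2} and evaluate the differential--shift expression $g^*(\overline{\zeta}-\partial_\zeta)(pf)$. Both arguments hinge on the same two coincidences you isolate (the frequency $2\pi\mathrm{i}$ and the independence of $p$ from $\zeta_n$), and both are short; your version makes the translation-operator mechanism $\exp(-\partial_{\zeta_n})$ explicit, while the paper's integral computation is slightly more self-contained in that it does not appeal to Corollary~\ref{c2}.
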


In the next section, we examine the range of the Berezin transform. We obtain that $u$ doesn't need to be a pluriharmonic function if $B[u]=\sum_{l=1}^Nf\overline{g}$ under our  hypothesis, see Proposition \ref{p2}. In section \ref{s3}, we study the Brown-Halmos theorems. We obtain explicit characterizations, see Theorem \ref{t1}. As an application for our results, we complete characterize the zero product of the Toeplitz operators, see Theorem \ref{t2}. Moreover, we provide some examples to explain the  difference between the Brown-Halmos theorems on the Bergman and Fock space, see Proposition \ref{l3} and \ref{p1}.
 These characterizations show that
there are extra cased for the Fock space, which have no analogue on the Bergman space and Hardy space. Finally,
we collect and discuss some problems in section \ref{s4}.

\section{The range of Berezin transform}\label{s2}
The problem to be studied in this section is to consider the range of the Berezin transform on the Fock space.
Given $f_l,g_l\in H(\mathbb{C}^n)\cap \text{Sym}(\mathbb{C}^n)$ for $l=1,\cdots,N$. Consider the equation
\begin{equation}\label{E2}
\mathcal{B}[u]=\sum_{l=1}^Nf_{l}\overline{g_l}\quad\text{with}\quad u\in L^{2}(\mathbb{C}^n,d\mu).
\end{equation}
We define $$\mathcal{F}(ue^{-|\cdot|^2})(z)=\frac{1}{\pi^n}\int_{\mathbb{C}^n}
u(\zeta)e^{-|\zeta|^2}e^{\mathrm{i}\Re(z\cdot\overline{\zeta})}dV(\zeta),$$
where $\mathcal{F}$ stands for the Fourier transform and $\Re(z\cdot\overline{\zeta})$ is the real part of $z\cdot\overline{\zeta}$, see \cite{Bauer} for more information. Recall that $$f^*(z)=\overline{f(\overline{z})}$$ for $f\in H(\mathbb{C}^n).$ The solution $u$ was obtained in \cite{Bauer}. For the reader's
convenience, we include the formula and its proof here.
\begin{lemma}\label{l1}
Let $$u(\zeta)=e^{|\zeta|^2}\mathcal{F}^{-1}\bigg[e^{-\frac{|z|^2}{4}}
Q(\frac{\mathrm{i}z}{2},\frac{\mathrm{i}\overline{z}}{2})\bigg](\zeta),\quad Q(z,w)=\sum_{l=1}^N f_{l}(z)g_l^*(w).$$
Then $u$ above is the unique solution of (\ref{E2}).
\end{lemma}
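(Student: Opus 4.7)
\smallskip
\noindent\textbf{Proof plan for Lemma \ref{l1}.}
The plan is to reduce the Berezin identity $\mathcal{B}[u]=\sum_l f_l\overline{g_l}$ to an identity between Fourier transforms of explicit functions in $L^1(\mathbb{C}^n,dV)$, and then invert. First I would rewrite the Berezin transform as a Gaussian convolution: a direct computation gives
$$
\mathcal{B}[u](\zeta)=\frac{e^{-|\zeta|^2}}{\pi^n}\int_{\mathbb{C}^n}u(w)e^{-|w|^2}e^{w\cdot\overline{\zeta}+\overline{w}\cdot\zeta}\,dV(w),
$$
so that multiplication by $e^{|\zeta|^2}$ turns $\mathcal{B}[u]$ into a kind of "two-sided" Laplace transform of the $L^1$ function $ue^{-|\cdot|^2}$ (the hypothesis $u\in L^{2}(\mathbb{C}^n,d\mu)$ gives $ue^{-|\cdot|^2}\in L^{1}\cap L^{2}(\mathbb{C}^n,dV)$ by Cauchy--Schwarz).

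Next I would perform the analytic-continuation step that converts this into the Fourier kernel. Viewing the integral as a holomorphic function $F(\zeta_1,\zeta_2)=\frac{1}{\pi^n}\int u(w)e^{-|w|^2}e^{w\cdot\zeta_2+\overline{w}\cdot\zeta_1}\,dV(w)$ jointly in $(\zeta_1,\zeta_2)\in\mathbb{C}^{2n}$, the restriction $\zeta_2=\overline{\zeta_1}$ gives $e^{|\zeta_1|^2}\mathcal{B}[u](\zeta_1)=e^{\zeta_1\cdot\zeta_2}\sum_l f_l(\zeta_1)g_l^{*}(\zeta_2)$, an identity between two entire functions of $(\zeta_1,\zeta_2)$ that agree on the totally-real diagonal. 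By polarization (comparing Taylor coefficients in $\zeta_1,\zeta_2$, which are uniquely determined by the restriction to $\zeta_2=\overline{\zeta_1}$) the two holomorphic extensions coincide on all of $\mathbb{C}^{2n}$. Specializing now to $\zeta_1=\frac{\mathrm{i}z}{2}$, $\zeta_2=\frac{\mathrm{i}\overline{z}}{2}$, the exponent $w\cdot\zeta_2+\overline{w}\cdot\zeta_1$ becomes exactly $\mathrm{i}\Re(z\cdot\overline{w})$, so the left side equals $\mathcal{F}(ue^{-|\cdot|^2})(z)$, while the right side equals $e^{-|z|^2/4}Q\bigl(\tfrac{\mathrm{i}z}{2},\tfrac{\mathrm{i}\overline{z}}{2}\bigr)$. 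Therefore
$$
\mathcal{F}(ue^{-|\cdot|^2})(z)=e^{-|z|^2/4}Q\Bigl(\tfrac{\mathrm{i}z}{2},\tfrac{\mathrm{i}\overline{z}}{2}\Bigr).
$$

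The remaining steps are routine: apply $\mathcal{F}^{-1}$ to both sides and multiply by $e^{|\zeta|^2}$ to obtain the stated formula for $u$. Uniqueness then follows because the Berezin transform is injective on $L^2(\mathbb{C}^n,d\mu)$, equivalently because $\mathcal{F}$ is injective on $L^1$; if $u_1,u_2$ both solved \eqref{E2}, then $(u_1-u_2)e^{-|\cdot|^2}\in L^1(\mathbb{C}^n,dV)$ has Fourier transform $e^{-|z|^2/4}\cdot 0=0$, whence $u_1=u_2$ a.e.

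The main obstacle is the analytic-continuation/polarization step: one must check that the integral defining $F(\zeta_1,\zeta_2)$ converges absolutely for all $(\zeta_1,\zeta_2)\in\mathbb{C}^{2n}$ and is entire there, which uses the Gaussian decay of $e^{-|w|^2}$ together with the growth control on $u$ coming from $u\in L^2(d\mu)$, and then that the holomorphic extension is unique. Once this is in place the rest is bookkeeping with the Fourier transform. The hypotheses $f_l,g_l\in\mathrm{Sym}(\mathbb{C}^n)$ guarantee that $Q\bigl(\tfrac{\mathrm{i}z}{2},\tfrac{\mathrm{i}\overline{z}}{2}\bigr)e^{-|z|^2/4}$ is itself in the Schwartz-like class on which $\mathcal{F}^{-1}$ acts without ambiguity, so the final expression for $u$ is well defined.
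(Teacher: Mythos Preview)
Your proposal is correct and follows essentially the same route as the paper: write the Berezin transform as the integral against $e^{z\cdot\overline{\zeta}+\overline{z}\cdot\zeta-|\zeta|^2}$, complexify (your polarization step) to obtain $e^{z\cdot w}Q(z,w)$ as an entire function of $(z,w)$, specialize to $(z,w)=(\tfrac{\mathrm{i}z}{2},\tfrac{\mathrm{i}\overline{z}}{2})$ to recognize $\mathcal{F}(ue^{-|\cdot|^2})$, and invert. Your discussion of the convergence of $F(\zeta_1,\zeta_2)$ and of uniqueness via injectivity of $\mathcal{F}$ is more explicit than the paper's, but the argument is the same.
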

\begin{proof}
Since $\mathcal{B}[u]=\sum_{l=1}^Nf_{l}\overline{g_l}$, then
$$\sum_{l=1}^Nf_{l}\overline{g_l}=\frac{e^{-|z|^2}}{\pi^n}\int_{\mathbb{C}^n}u(\zeta)\exp\{z\cdot\overline{\zeta}+\overline{z}\cdot\zeta-|\zeta|^2\}dV(\zeta).$$
Complexifying both sides, we have
\begin{align*}
Q(z,w)&=\sum_{l=1}^Nf_l(z)g_l^*(w)\\
&=\frac{e^{-z\cdot w}}{\pi^n}\int_{\mathbb{C}^n}u(\zeta)\exp\{z\cdot\overline{\zeta}+w\cdot\zeta-|\zeta|^2\}dV(\zeta)
\end{align*}
for $z,w\in \mathbb{C}^n$. This equivalent to
\begin{align*}
e^{-\frac{|z|^2}{4}}Q(\frac{\mathrm{i}z}{2},\frac{\mathrm{i}\overline{z}}{2})&
=\frac{1}{\pi^n}\int_{\mathbb{C}^n}u(\zeta)e^{-|\zeta|^2}e^{\mathrm{i}\Re(z\cdot\overline{\zeta})}dV(\zeta)\\
&:=\mathcal{F}(ue^{-|\cdot|^2})(z),
\end{align*}
where $\mathcal{F}$ is the Fourier transform. Note that the function $z\rightarrow Q(\frac{\mathrm{i}z}{2},\frac{\mathrm{i}\overline{z}}{2})$ belongs to the symbol space. Applying the inverse Fourier transform $\mathcal{F}^{-1}$  on $L^{2}(\mathbb{C}^n,dV)$, we thus obtain
$$u(\zeta)=e^{|\zeta|^2}\mathcal{F}^{-1}\bigg[e^{-\frac{|z|^2}{4}}Q(\frac{\mathrm{i}z}{2},\frac{\mathrm{i}\overline{z}}{2})\bigg](\zeta)$$ for
$\zeta\in \mathbb{C}^n$. This implies that $u$ above is the unique solution of (\ref{E2}).
\end{proof}

In the rest of this paper, we use the following notations  $$\mathrm{C}_{i}^{l}=\prod_{k=1}^n\mathrm{C}_{i_k}^{l_k}\quad, \quad \mathrm{C}_{i_k}^{l_k}=\frac{i_k!}{l_k!(i_k-l_k)!},\quad |i-l|=\sum_{k=1}^n (i_k-l_k)$$
for any multi-index $i$ and $l$. We say that $i\geq l$ if $i_k\geq l_k$ for each $k$. It is easy to see that
 \begin{align*}
(\overline{z}+\partial_z)^if(z)&=\prod_{k=1}^n (\overline{z_k}+\partial_{z_k})^{i_k}f(z_1,\cdots,z_n)\nonumber\\
&=\prod_{k=2}^n (\overline{z_k}+\partial_{z_k})^{i_k}\bigg(\sum_{l_1=0}^{i_1}\mathrm{C}_{i_1}^{l_1} \overline{z_1}^{l_1}\partial_{1}^{i_1-l_1} f(z) \bigg)\nonumber\\
&=\prod_{k=1}^n  \bigg(\sum_{l_k=0}^{i_k}\mathrm{C}_{i_k}^{l_k} \overline{z_k}^{l_k}\partial_{k}^{i_k-l_k} f(z) \bigg).
\end{align*}
So we obtain that
\begin{equation}\label{E8}
(\overline{z}+\partial_z)^if(z):=\sum_{l\leq i} \mathrm{C}_{i}^{l}\overline{ z}^l \partial_{z}^{i-l}f(z).
\end{equation}
In fact, the unique solution of (\ref{E2}) above can be made more explicit via the next theorem.
\begin{theorem}\label{l2}
If $u$ above is the solution of (\ref{E2}), then
$$u(\zeta)=\sum_{l=1}^Ng_l^*(\overline{\zeta}-\partial_\zeta)f_l(\zeta)\in L^2(\mathbb{C}^n,d\mu).$$
\end{theorem}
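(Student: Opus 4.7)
The plan is to verify directly that the candidate
$$v(\zeta) := \sum_{l=1}^N g_l^*(\overline{\zeta}-\partial_\zeta)f_l(\zeta)$$
lies in $L^{2}(\mathbb{C}^n,d\mu)$ and satisfies $\mathcal{B}[v]=\sum_{l=1}^N f_l\overline{g_l}$; the uniqueness of the solution to (\ref{E2}), established in Lemma \ref{l1}, then forces $u=v$. The key preliminary step is a conjugation identity. Since $\partial_{\zeta_k}$ commutes with multiplication by $\overline{\zeta_k}$ (holomorphic partials treat $\overline{\zeta}$ as constant) and $\partial_{\zeta_k}(e^{-|\zeta|^2})=-\overline{\zeta_k}\,e^{-|\zeta|^2}$, a short induction on $|i|$ shows
$$(\overline{\zeta}-\partial_\zeta)^{i} h(\zeta) = e^{|\zeta|^2}(-\partial_\zeta)^{i}\bigl[e^{-|\zeta|^2} h(\zeta)\bigr].$$
Expanding $g(z)=\sum_{i} b_{i} z^{i}$, so $g^*(w)=\sum_{i}\overline{b_{i}}\,w^{i}$, formal summation gives the operator identity
$$g^*(\overline{\zeta}-\partial_\zeta) h(\zeta) = e^{|\zeta|^2} g^*(-\partial_\zeta)\bigl[e^{-|\zeta|^2} h(\zeta)\bigr].$$

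With this in hand, the next step is to insert the identity into
$$\mathcal{B}[v](z) = \frac{e^{-|z|^2}}{\pi^n}\int_{\mathbb{C}^n} v(\zeta)\, e^{z\cdot\overline{\zeta}+\overline{z}\cdot\zeta-|\zeta|^2}\,dV(\zeta),$$
noting that the $e^{|\zeta|^2}$ supplied by the identity cancels the $e^{-|\zeta|^2}$ in the weight. I would then integrate by parts to transfer $g_l^*(-\partial_\zeta)$ onto $e^{z\cdot\overline{\zeta}+\overline{z}\cdot\zeta}$: since $\partial_{\zeta_k}$ annihilates $e^{z\cdot\overline{\zeta}}$ and acts as multiplication by $\overline{z_k}$ on $e^{\overline{z}\cdot\zeta}$, one obtains the scalar $g_l^*(\overline{z})=\overline{g_l(z)}$. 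The residual integral $\int f_l(\zeta)\,e^{z\cdot\overline{\zeta}+\overline{z}\cdot\zeta-|\zeta|^2}\,dV(\zeta)=\pi^n e^{|z|^2} f_l(z)$ is the Fock-space reproducing property applied to $f_l\in F^{2}$ (which holds by (\ref{E14})). Summing over $l$ delivers $\mathcal{B}[v](z)=\sum_{l=1}^N f_l(z)\overline{g_l(z)}$, as required.

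For the $L^{2}$ membership, I would expand $(\overline{\zeta}-\partial_\zeta)^{i}$ via the binomial theorem (legitimate because $\overline{\zeta}$ and $\partial_\zeta$ commute) and combine two ingredients: Cauchy estimates show that the Taylor coefficients $\overline{b_{i}^{(l)}}$ of $g_l^*$ decay super-exponentially because $g_l\in\mathrm{Sym}(\mathbb{C}^n)$; and the same Sym-class growth on $f_l$ lifts, via Cauchy estimates on discs, to bounds $|\partial^{j} f_l(\zeta)|\leq C_{\epsilon} e^{\epsilon|\zeta|^2}$ for every $\epsilon>0$. Together these give $|v(\zeta)|\leq C_{\epsilon} e^{\epsilon|\zeta|^2}$ for every $\epsilon>0$, so in fact $v\in\mathrm{Sym}(\mathbb{C}^n)\subset L^{2}(\mathbb{C}^n,d\mu)$.

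The main obstacle will be making the two formal steps in the Berezin computation rigorous: vanishing of boundary terms in the integration by parts over $\mathbb{C}^n$, and the interchange of the infinite sum defining $g_l^*(-\partial_\zeta)$ with the integral against the Gaussian kernel. Both reduce to absolute-convergence estimates of the form $\sum_{i}|b_{i}^{(l)}|\,|\overline{z}|^{|i|}<\infty$ against a Gaussian weight, which are handled by the super-exponential decay of Sym-class Taylor coefficients noted above. Once these technicalities are dispatched, uniqueness from Lemma \ref{l1} closes the argument.
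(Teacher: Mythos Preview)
Your proposal is correct and takes a route that differs from the paper's in two ways worth noting. The paper first \emph{derives} the formula $u(\zeta)=\sum_l g_l^*(\overline{\zeta}-\partial_\zeta)f_l(\zeta)$ by starting from the inverse Fourier representation of Lemma~\ref{l1}, replacing $\overline{(-z)^i}$ by $(-1)^{|i|}\partial_\zeta^i$ acting on the exponential kernel, and then recognising $(-1)^{|i|}\partial_\zeta^i[f_l e^{-|\zeta|^2}]=e^{-|\zeta|^2}(\overline{\zeta}-\partial_\zeta)^i f_l$ (your conjugation identity, used in reverse). It then separately verifies $\mathcal{B}[u]=\sum_l f_l\overline{g_l}$ by a lengthy chain of binomial expansions of $(\overline{\zeta}\pm\partial_\zeta)^i$ and commutation of mixed powers. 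You bypass the Fourier derivation entirely, instead checking $\mathcal{B}[v]$ directly via the conjugation identity followed by integration by parts --- which collapses the whole verification to the single observation $\partial_{\zeta_k}e^{\overline{z}\cdot\zeta+z\cdot\overline{\zeta}}=\overline{z_k}\,e^{\overline{z}\cdot\zeta+z\cdot\overline{\zeta}}$ plus the reproducing property --- and then invoke uniqueness. Your argument is shorter and more transparent; the paper's Fourier derivation, on the other hand, explains where the formula comes from rather than pulling it out of a hat. You also give explicit Sym-class growth estimates for the $L^2$ membership of $v$, a point the paper leaves implicit.
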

\begin{proof}
By Lemma \ref{l1}, $$u(\zeta)=e^{|\zeta|^2}\mathcal{F}^{-1}\bigg[e^{-\frac{|z|^2}{4}}Q(\frac{\mathrm{i}z}{2},\frac{\mathrm{i}\overline{z}}{2})\bigg](\zeta)$$
for $\zeta\in \mathbb{C}^n$. Let $g_l(z)=\sum_{i} a_{il} z^i.$ By a change of variables and standard calculus computations,
\begin{align*}
e^{-|\zeta|^2}u(\zeta)%&=\frac{1}{2\pi \mathrm{i} }\int_{\mathbb{C}}  e^{-\frac{|z|^2}{4}}f(\frac{\mathrm{i}z}{2})\overline{g({-\frac{\mathrm{i}z}{2}})}e^{-\mathrm{i}\Re(z\overline{\zeta})}dA(z)\\
&=\sum_{l=1}^N\frac{1}{\pi^n}\int_{\mathbb{C}^n}  e^{-|z|^2}f_l(z)\overline{g_l(-z)}e^{-\mathrm{i}\Re(-2\mathrm{i}z\cdot\overline{\zeta})}dV(z)\\
&=\sum_{l=1}^N\frac{1}{\pi^n}\int_{\mathbb{C}^n}  f_l(z)\overline{g_l(-z)} e^{\overline{z}\cdot\zeta-z\cdot\overline{\zeta}-|z|^2}dV(z)\\
&=\sum_{l=1}^N\sum_i \overline{a_{il}}\frac{1}{\pi^n}\int_{\mathbb{C}^n} f_l(z)  \overline{(-z)^i} e^{\overline{z}\cdot\zeta-z\cdot\overline{\zeta}-|z|^2}dV(z).
\end{align*}
It follows that
\begin{align*}
e^{-|\zeta|^2}u(\zeta)&=\sum_{l=1}^N\sum_i \overline{a_{il}}\frac{1}{\pi^n}\int_{\mathbb{C}^n} (-1)^{|i|}f_l(z)  \partial_\zeta^i(e^{\overline{z}\cdot\zeta-z\cdot\overline{\zeta}-|z|^2})dV(z)\\
&=\sum_{l=1}^N\sum_i \overline{a_{il}}(-1)^{|i|}\partial_\zeta^i\bigg[\frac{1}{\pi^n}\int_{\mathbb{C}^n} f_l(z)e^{\overline{z}\cdot\zeta-z\cdot\overline{\zeta}-|z|^2}dV(z)\bigg]\\
&=\sum_i \overline{a_{il}} (-1)^{|i|}\partial^i_\zeta[f_l(\zeta)e^{-|\zeta|^2}].
\end{align*}
It is easy to check that
$$\sum_i \overline{a_{il}} (-1)^{|i|}\partial^i_\zeta[f_l(\zeta)e^{-|\zeta|^2}]=\sum_i \overline{a_{il}} e^{-|\zeta|^2}(-1)^{|i|} (\partial_\zeta-\overline{\zeta})^if_l(\zeta).$$
So we can see that
$$u(\zeta)=\sum_{l=0}^Ng_l^*(\overline{\zeta}-\partial_\zeta)f_l(\zeta).$$

Now, we will prove that $\mathcal{B}[u]=\sum_{i=0}^N f_l\overline{g_l}$. Since the Berezin transform is linear, we only need to consider the simplest case:
$$\mathcal{B}[g^*(\overline{z}-\partial_z)f(z)](\zeta)=f(\zeta)\overline{g(\zeta)}.$$   It follows from (\ref{E8}) that
\begin{align*}
&\quad e^{|\zeta|^2}\mathcal{B}[g^*(\overline{z}-\partial_z)f(z)](\zeta)\\&=\frac{1}{\pi^n} \int_{\mathbb{C}^n}[g^*(\overline{z}-\partial_z)f(z)]e^{\overline{z}\cdot\zeta+z\cdot\overline{\zeta}-|z|^2}dV(z)\\
&=\sum_i \overline{a_i}\frac{1}{\pi^n} \int_{\mathbb{C}^n} [(\overline{z}-\partial_z)^if(z)]e^{\overline{z}\cdot\zeta+z\cdot\overline{\zeta}-|z|^2}dV(z)\\
&=\sum_i \overline{a_i}\bigg(\sum_{j\leq i}\mathrm{C}_{i}^j\frac{1}{\pi^n}\int_{\mathbb{C}^n}(-1)^{|i-j|}
\overline{z}^j[\partial_z^{i-j}f(z)]e^{\overline{z}\cdot\zeta+z\cdot\overline{\zeta}-|z|^2}dV(z)\bigg).
\end{align*}
By a simple computation,
\begin{align*}
&\quad\sum_{j\leq i}\mathrm{C}_{i}^j(-1)^{|i-j|}\frac{1}{\pi^n}\int_{\mathbb{C}^n}
\overline{z}^j[\partial_z^{i-j}f(z)]e^{\overline{z}\cdot\zeta+z\cdot\overline{\zeta}-|z|^2}dV(z)\\
&=\sum_{j\leq i}\mathrm{C}_{i}^j\frac{1}{\pi^n}\int_{\mathbb{C}^n}(-1)^{|i-j|}
[\partial_z^{i-j}f(z)]\partial_\zeta^j [e^{\overline{z}\zeta+z\overline{\zeta}-|z|^2}]dV(z)\\
&=\sum_{j\leq i}\mathrm{C}_{i}^j(-1)^{|i-j|}\partial_\zeta^j\bigg(\frac{1}{\pi^n}\int_{\mathbb{C}}
[\partial_z^{i-j}f(z)]e^{\overline{z}\cdot\zeta+z\cdot\overline{\zeta}-|z|^2}dV(z)\bigg)\\
&=\sum_{j\leq i}\mathrm{C}_{i}^j(-1)^{|i-j|}\partial_\zeta^j \bigg[e^{|\zeta|^2}\partial_\zeta^{i-j}f(\zeta)\bigg]\\
&=\sum_{j\leq i}\mathrm{C}_{i}^j(-1)^{|i-j|} e^{|\zeta|^2}(\overline{\zeta}+\partial_\zeta)^j[\partial_\zeta^{i-j}f(\zeta)].
\end{align*}
So we have
\begin{align}\label{E7}
\mathcal{B}[g^*(\overline{z}-\partial_z)f(z)](\zeta)&=\sum_i \overline{a_i} \sum_{j\leq i}\mathrm{C}_{i}^j(-1)^{|i-j|}(\overline{\zeta}+\partial_\zeta)^j[\partial_\zeta^{i-j}f(\zeta)]\nonumber\\
&=\sum_i \overline{a_i}  \sum_{j\leq i}\mathrm{C}_{i}^j(\overline{\zeta}+\partial_\zeta)^j[(-\partial_\zeta)^{i-j}f(\zeta)].
\end{align}
It is clear that
\begin{align*}
(\overline{\zeta}+\partial_\zeta)^j[(\partial_\zeta)^{i-j}f(\zeta)]&=\sum_{k\leq j} \mathrm{C}_j^k\overline{\zeta}^k \bigg\{\partial_{\zeta}^{j-k}[\partial_\zeta^{i-j}f(\zeta)]\bigg\}
\\&=\sum_{k\leq j} \mathrm{C}_j^k\overline{\zeta}^k \bigg\{\partial_{\zeta}^{i-j} [\partial_\zeta^{j-k}f(\zeta)]\bigg\}\\
&=\partial_\zeta^{i-j}[(\overline{\zeta}+\partial_\zeta)^jf(\zeta)].
\end{align*}
This fact with (\ref{E7}) gives
\begin{align*}
&\quad\mathcal{B}[g^*(\overline{z}-\partial_z)f(z)](\zeta)\\
&=\sum_i \overline{a_i}   \sum_{j\leq i} \mathrm{C}_{i}^j (-1)^{|i-j|}\partial_\zeta^{i-j}[(\overline{\zeta}+\partial_\zeta)^jf(\zeta)] \\ %¶Ï¿ª
&=\sum_i \overline{a_i}   \sum_{j\leq i} \mathrm{C}_{i}^j(-\partial_\zeta)^{i-j}[(\overline{\zeta}+\partial_\zeta)^jf(\zeta)] \\
&=\sum_i \overline{a_i} \overline{\zeta}^i f(\zeta)\\
&=f(\zeta) \overline{g(\zeta)}.
\end{align*}
This shows that $B[u]=\sum_{l=1}^N f_l\overline{g_l}$,
which completes the proof.
\end{proof}

Recall that $$I_1=(1,1\cdots,1)\in \mathbb{C}^n.$$ It is clear that
$$|I_1|^2=n,\quad I_1^i=1^{|i|}=1$$  for every multi-index $i.$
There is a function $f(z)=e^{z\cdot I_1}$ so that
\begin{equation}\label{E9}
\partial^i_z f=f
\end{equation} for every multi-index $i.$ It is easy to see that
$$|z\cdot I_1|\leq n|z|.$$ So, $f\in \mathrm{Sym}(\mathbb{C}^n)\cap H(\mathbb{C}^n).$
From Theorem \ref{l2}, we obtain the following interesting result, which shows that there is a non-trivial solution $u$. This means that
both $f$ and $g$ are non-constants functions in (\ref{E2}), or equivalently, $u$ is a non-pluriharmonic function.
\begin{prop}\label{p2}
Suppose that $u\in L^{2}(\mathbb{C}^n,d\mu)$ and $u$ is the solution  of (\ref{E2}). Let $f(z)=e^{z\cdot I_1}$, then
$$u(z)=e^{z\cdot I_1}\overline{g(z-I_1)}.$$
\end{prop}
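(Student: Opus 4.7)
The plan is to specialize Theorem~\ref{l2} to the case $N=1$, $f_1=f$, $g_1=g$, which immediately gives the unique solution of (\ref{E2}) in the explicit closed form
\[
u(\zeta) = g^*\!\left(\overline{\zeta} - \partial_\zeta\right) f(\zeta).
\]
Expanding $g$ as a power series $g(w)=\sum_i a_i w^i$ yields $g^*(w)=\sum_i \overline{a_i}\, w^i$, so that
\[
u(\zeta) = \sum_i \overline{a_i}\, (\overline{\zeta} - \partial_\zeta)^i f(\zeta).
\]
Everything will then reduce to computing $(\overline{\zeta}-\partial_\zeta)^i f(\zeta)$ for each multi-index $i$.

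To carry out this computation I would use (\ref{E8}) (adjusted for the minus sign on the derivative) to write
\[
(\overline{\zeta}-\partial_\zeta)^i f(\zeta) = \sum_{l\leq i} \mathrm{C}_i^l\, \overline{\zeta}^{\,l}\, (-1)^{|i-l|}\, \partial_\zeta^{i-l} f(\zeta),
\]
and then invoke the key property (\ref{E9}): because $f(z)=e^{z\cdot I_1}=\prod_k e^{z_k}$, every mixed partial $\partial_\zeta^{i-l} f$ equals $f$. The sum therefore factors over components, and applying the one-dimensional binomial theorem coordinatewise gives
\[
(\overline{\zeta}-\partial_\zeta)^i f(\zeta) = f(\zeta) \prod_{k=1}^n (\overline{\zeta_k}-1)^{i_k} = f(\zeta)\, \overline{(\zeta-I_1)^i}.
\]

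Substituting back into the expansion for $u$ would then yield
\[
u(\zeta) = f(\zeta) \sum_i \overline{a_i}\, \overline{(\zeta-I_1)^i} = f(\zeta)\, \overline{g(\zeta-I_1)} = e^{\zeta\cdot I_1}\, \overline{g(\zeta-I_1)},
\]
which is the required formula. The $L^2(\mathbb{C}^n,d\mu)$ membership is already part of the conclusion of Theorem~\ref{l2} and therefore needs no separate argument. The only potentially delicate step is the multi-index bookkeeping in the binomial expansion, but the exponential eigenfunction property (\ref{E9}) collapses all derivatives of $f$ to $f$ itself, so this step is essentially a direct application of the binomial theorem and not a serious obstacle.
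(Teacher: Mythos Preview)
Your proof is correct and follows essentially the same route as the paper: both specialize Theorem~\ref{l2} to $N=1$, expand $g$ as a power series, apply the binomial identity (\ref{E8}), and use the eigenfunction property (\ref{E9}) to collapse every partial derivative of $f$ to $f$ itself, after which the binomial theorem gives $(\overline{\zeta}-\partial_\zeta)^i f=(\overline{\zeta}-I_1)^i f$. The paper additionally carries out a direct verification that $\mathcal{B}\bigl[\overline{g(z-I_1)}\,e^{z\cdot I_1}\bigr]=f\overline{g}$, but as you correctly note this is already guaranteed by Theorem~\ref{l2} and is logically redundant.
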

\begin{proof}
By Theorem \ref{l2}, $u(z)=g^*(\overline{z}-\partial_z)f(z).$ Let $g(z)=\sum_i a_i z^i.$ Since $f(z)=e^{z\cdot I_1},$ then, by (\ref{E9}), we obtain
\begin{align*}
u(z)&=\sum_i \overline{a_i}(\overline{z}-\partial_z) e^{z\cdot I_1}\\
&=\sum_i \overline{a_i} \sum_{j\leq i} \mathrm{C}_{i}^j \overline{z}^j [(-\partial_z)^{i-j}e^{z\cdot I_1}]\\
&=\sum_i \overline{a_i} \sum_{j\leq i} \mathrm{C}_{i}^j \overline{z}^j {(-1)}^{|i-j|}e^{z\cdot I_1}.
\end{align*}
So we have
\begin{align*}
u(z)
&=\sum_i \overline{a_i} e^{z\cdot I_1}(\overline{z}-I_1)^i\\
&=\overline{g(z-I_1)}e^{z\cdot I_1}.
\end{align*}

Let's compute the Berezin transform of $\overline{g(z-I_1)}e^{z\cdot I_1}$. By the definition of the Berezin transfom,
%\begin{align}\label{E14}
%e^{|\zeta|^2}\mathcal{B}[u](\zeta)&=\frac{1}{\pi^n} \int_{\mathbb{C}^n}[g^*(\overline{z}-\partial_z)f(z)]e^{\overline{z}\cdot\zeta+z\cdot
%\overline{\zeta}-|z|^2}dV(z)\nonumber\\
%&=\sum_i \overline{a_i}\frac{1}{\pi^n} \int_{\mathbb{C}^n} [(\overline{z}-\partial_z)^if(z)]e^{\overline{z}\cdot\zeta+z\cdot\overline{\zeta}-|z|^2}dV(z)\nonumber\\
%&=\sum_i \overline{a_i}\bigg(\sum_{j\leq i}\mathrm{C}_{i}^j\frac{1}{\pi}\int_{\mathbb{C}^n}(-1)^{|i-j|}
%\overline{z}^j[\partial_z^{i-j}f](z)e^{\overline{z}\cdot\zeta+z\cdot\overline{\zeta}-|z|^2}dV(z)\bigg).
%\end{align}
%It is easy to check that
%\begin{align*}
%&\quad\sum_{j\leq i}\mathrm{C}_{i}^j(-1)^{|i-j|}\frac{1}{\pi^n}\int_{\mathbb{C}^n}
%\overline{z}^j[\partial_z^{i-j}f(z)]e^{\overline{z}\cdot\zeta+z\cdot\overline{\zeta}-|z|^2}dV(z)\\
%&=\sum_{j\leq i}\mathrm{C}_{i}^j\frac{1}{\pi^n}\int_{\mathbb{C}^n}(-1)^{|i-j|}
%[\partial_z^{i-j}f(z)]\partial_\zeta^j [e^{\overline{z}\cdot\zeta+z\cdot\overline{\zeta}-|z|^2}]dV(z)\\
%&=\sum_{j\leq i}\mathrm{C}_{i}^j(-1)^{|i-j|}\partial_\zeta^j\bigg(\frac{1}{\pi^n}\int_{\mathbb{C}^n}
%[\partial_z^{i-j}f(z)]e^{\overline{z}\cdot\zeta+z\cdot\overline{\zeta}-|z|^2}dV(z)\bigg).
%\end{align*}
%We note that
%\begin{align*}
%&\quad\partial_\zeta^j\bigg(\frac{1}{\pi^n}\int_{\mathbb{C}^n}
%[\partial_z^{i-j}f(z)]e^{\overline{z}\cdot\zeta+z\cdot\overline{\zeta}-|z|^2}dV(z)\bigg)\\
%&=\partial_\zeta^j \bigg[e^{|\zeta|^2}\partial_\zeta^{i-j}f(\zeta)\bigg]\\
%&= e^{|\zeta|^2}(\overline{\zeta}+\partial_\zeta)^j[\partial_\zeta^{i-j}f(\zeta)].
%\end{align*}
%This fact with (\ref{E14}) gives
\begin{align*}
e^{|\zeta|^2}\mathcal{B}[u](\zeta)&=\sum_i \overline{a_i}\int_{\mathbb{C}^n} (\overline{z}-I_1)^i f(z) e^{\overline{z}\cdot \zeta+z\cdot \overline{\zeta}}d\mu(z)\\
&=\sum_i \overline{a_i} \sum_{j\leq i}\mathrm{C}_{i}^j(-I_1)^{i-j}
\int_{\mathbb{C}^n} \overline{z}^j f(z) e^{\overline{z}\cdot \zeta+z\cdot \overline{\zeta}}d\mu(z).
\end{align*}
It is easy to see that
\begin{align*}
\int_{\mathbb{C}^n} \overline{z}^j f(z) e^{\overline{z}\cdot \zeta+z\cdot \overline{\zeta}}d\mu(z)&=\partial_\zeta^j[f(\zeta)e^{|\zeta|^2}].
\end{align*}
Since $f(z)=e^{z\cdot I_1}$, then we have
\begin{align*}
\int_{\mathbb{C}^n} \overline{z}^j f(z) e^{\overline{z}\cdot \zeta+z\cdot \overline{\zeta}}d\mu(z)&=(\overline{\zeta}+I_1)^je^{\zeta\cdot(\overline{\zeta}+I_1)}.
\end{align*}
It follows that
\begin{align*}
\mathcal{B}[u](\zeta) &=\sum_i \overline{a_i} \sum_{j\leq i}\mathrm{C}_{i}^j(-I_1)^{i-j}(\overline{\zeta}+I_1)^j e^{\zeta\cdot I_1}\\
&=\sum_i \overline{a_i} \overline{\zeta}^i e^{\zeta\cdot I_1} \\
&=\overline{g(\zeta)}e^{\zeta\cdot I_1}.
\end{align*}
This implies that $\mathcal{B}[u]=f\overline{g}$, which completes the proof.
\end{proof}

%\begin{align*}
%\mathcal{B}[u](\zeta)&=\sum_i \overline{a_i} \sum_{j\leq i}\mathrm{C}_{i}^j(-1)^{|i-j|}(\overline{\zeta}+\partial_\zeta)^j[\partial_\zeta^{i-j}f(\zeta)]\nonumber\\
%&=\sum_i \overline{a_i}  \sum_{j\leq i}\mathrm{C}_{i}^j(\overline{\zeta}+\partial_\zeta)^j[(-\partial_\zeta)^{i-j}f(\zeta)].
%\end{align*}
%Since $f(z)=e^{z\cdot I_1}$, then we have
%\begin{align*}
%&\quad
%\sum_i \overline{a_i}  \sum_{j\leq i}\mathrm{C}_{i}^j(\overline{\zeta}+\partial_\zeta)^j[(-\partial_\zeta)^{i-j}f(\zeta)]\\
%&=\sum_i \overline{a_i}  \sum_{j\leq i}\mathrm{C}_{i}^j(-1)^{|i-j|}(\overline{\zeta}+\partial_\zeta)^je^{\zeta\cdot I_1}\\
%&=\sum_i \overline{a_i}  \sum_{j\leq i}\mathrm{C}_{i}^j(-1)^{|i-j|}(\overline{\zeta}+I_1)^je^{\zeta\cdot I_1}.
%\end{align*}
%Thus, we see that
%\begin{align*}
%\mathcal{B}[u](\zeta)
%&=\sum_i \overline{a_i} \overline{\zeta}^i e^{\zeta\cdot I_1} \\
%&=\overline{g(\zeta)}e^{\zeta\cdot I_1}.
%\end{align*}

We close this section with a remark on the range of the Berezin transform.
\begin{remark}\label{r1}
In Proposition \ref{p2}, we consider the case of $f(z)=e^{z\cdot I_1}$. In fact, if we set $f(z)=e^{z\cdot \overline{\eta}}$, then we obtain
$$u(z)=\overline{g(z-\eta)}e^{z\cdot \overline{\eta}},$$
where $u$ is the solution  of (\ref{E2}). So, many non-pluriharmonic functions $u$ with $\mathcal{B}[u]=f\overline{g}$ can be constructed in $L^2(\mathbb{C}^n,d\mu).$
\end{remark}
\section{Brown-Halmos type results}\label{s3}
In this section, we aim to characterize the Brown-Halmos theorems on the Fock space. We recall that
$T_fT_{\overline{g}}=T_{f\overline{g}}$ if and only if $$\mathcal{B}[T_fT_{\overline{g}}]=\mathcal{B}[T_{f\overline{g}}]=f\overline{g}$$
for $f,g\in\text{Sym}(\mathbb{C}^n)\cap H(\mathbb{C}^n).$ We now proceed to prove the statement (a) in Theorem \ref{TT2}.
\begin{theorem}\label{t1}
  Write
$$\varphi=f+\overline{g}\quad\text{and}\quad\psi=u+\overline{v}$$
where $f,g,u,v\in \text{Sym}(\mathbb{C}^n)\cap H(\mathbb{C}^n)$.
Suppose $h\in L^2(\mathbb{C}^n,d\mu),$ then $T_\varphi T_\psi=T_h$ if and only if
$$h=u\overline{g}+fu+\overline{g}\overline{v}+v^*({\overline{\cdot}}-\partial_\cdot) f(\cdot).$$
\end{theorem}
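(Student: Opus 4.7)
The plan is to expand $T_\varphi T_\psi$ by bilinearity, collapse three of the four resulting products into ordinary Toeplitz operators, and then identify the remaining ``mixed'' product $T_fT_{\overline{v}}$ via its Berezin symbol, invoking the uniqueness already supplied by Theorem \ref{l2}.

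The first step is purely algebraic. By linearity,
\[
T_\varphi T_\psi = T_fT_u + T_fT_{\overline{v}} + T_{\overline{g}}T_u + T_{\overline{g}}T_{\overline{v}}.
\]
For any $a\in \mathrm{Sym}(\mathbb{C}^n)\cap H(\mathbb{C}^n)$ and $h\in F^2$, the product $ah$ is holomorphic and remains in $F^2$ (the same $\mathrm{Sym}$-based estimate that yields (\ref{E14})), so $T_ah=P(ah)=ah$. This immediately gives $T_fT_u=T_{fu}$ and $T_{\overline{g}}T_u=T_{\overline{g}u}$, while taking the adjoint of $T_vT_g=T_{vg}$ yields $T_{\overline{g}}T_{\overline{v}}=T_{\overline{gv}}$. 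Hence the hypothesis $T_\varphi T_\psi=T_h$ reduces to the single operator identity
\[
T_fT_{\overline{v}} = T_{h-fu-\overline{g}u-\overline{gv}}.
\]

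The second step computes $\mathcal{B}[T_fT_{\overline{v}}]$ explicitly. For $v\in H(\mathbb{C}^n)$ and any $h\in F^2$,
\[
\langle T_{\overline{v}}K_z, h\rangle = \langle \overline{v}K_z, h\rangle = \int K_z\,\overline{vh}\,d\mu = \overline{v(z)h(z)} = \overline{v(z)}\,\langle K_z, h\rangle,
\]
by the reproducing property applied to the holomorphic function $vh$, so $T_{\overline{v}}K_z=\overline{v(z)}K_z$. Combined with $T_fK_z=fK_z$, this gives
\[
\mathcal{B}[T_fT_{\overline{v}}](z) = \frac{\langle T_fT_{\overline{v}}K_z, K_z\rangle}{K(z,z)} = \overline{v(z)}\cdot \frac{(fK_z)(z)}{K(z,z)} = f(z)\overline{v(z)}.
\]

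The two directions now fall out. In the forward direction, if $T_\varphi T_\psi=T_h$ then the symbol $w:=h-fu-\overline{g}u-\overline{gv}$ lies in $L^2(\mathbb{C}^n,d\mu)$ (each piece does via (\ref{E14})) and satisfies $\mathcal{B}[w]=\mathcal{B}[T_fT_{\overline{v}}]=f\overline{v}$; the uniqueness statement in Theorem \ref{l2}, applied with $N=1$, $f_1=f$, $g_1=v$, forces $w=v^*(\overline{\cdot}-\partial_\cdot)f(\cdot)$, which is exactly the claimed formula. Conversely, if $h$ is defined by that formula, then Theorem \ref{l2} gives $\mathcal{B}[v^*(\overline{\cdot}-\partial_\cdot)f(\cdot)]=f\overline{v}=\mathcal{B}[T_fT_{\overline{v}}]$, and injectivity of the Berezin transform---via the sesquiholomorphic extension $\widetilde{T}(z,w)=\langle TK_z,K_w\rangle$ on the dense set spanned by $\{K_z\}$---delivers $T_fT_{\overline{v}}=T_{v^*(\overline{\cdot}-\partial_\cdot)f(\cdot)}$, and summing the four pieces recovers $T_\varphi T_\psi=T_h$. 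The main obstacle is precisely this injectivity step in the converse: the operators need not be bounded, so one must argue on the dense subspace of reproducing kernels and use that $(z,w)\mapsto \langle TK_z,K_w\rangle$ is determined by its values on the diagonal $z=w$, an issue whose cleanest resolution again routes back through the uniqueness built into Lemma \ref{l1}.
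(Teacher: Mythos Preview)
Your proof is correct and follows essentially the same route as the paper: decompose $T_\varphi T_\psi$ into four pieces, collapse the three ``analytic/co-analytic'' products into a single Toeplitz operator $T_{fu+u\overline{g}+\overline{gv}}$, compute $\mathcal{B}[T_fT_{\overline{v}}]=f\overline{v}$, and then invoke Theorem~\ref{l2} (and the injectivity of $\mathcal{B}$) for both directions. The paper does the forward direction by taking the Berezin transform of the full identity at once rather than first establishing $T_fT_u=T_{fu}$ etc.\ as operator identities, but the content is the same; your explicit eigenvector computation $T_{\overline{v}}K_z=\overline{v(z)}K_z$ is a cleaner justification of $\mathcal{B}[T_fT_{\overline{v}}]=f\overline{v}$ than the paper's one-line assertion.
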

\begin{proof}
Trivially, $\mathcal{B}[T_\varphi T_\psi]=\mathcal{B}[h]$ if $T_\varphi T_\psi=T_h$. Then
$$\mathcal{B}[T_\varphi T_\psi]=fu+f\overline{v}+\mathcal{B}[u\overline{g}]+\overline{g}\overline{v}=\mathcal{B}[h].$$
It follows that
$$\mathcal{B}[h-u\overline{g}-fu-\overline{g}\overline{v}]=f\overline{v}.$$
From (\ref{E14}), one can see that $$u\overline{g}-fu-\overline{g}\overline{v}\in  L^2(\mathbb{C}^n,d\mu).$$
By Theorem \ref{l2}, we obtain that
$$h(z)-[u\overline{g}](z)-[fu](z)-[\overline{g}\overline{v}](z)=v^*(\overline{z}-\partial_z)f(z)\in L^2(\mathbb{C}^n,d\mu).$$
So we have $$h=u\overline{g}+fu+\overline{g}\overline{v}+v^*({\overline{\cdot}}-\partial_\cdot) f(\cdot)\in L^2(\mathbb{C}^n,d\mu).$$

On the other hand, we have
\begin{equation}\label{E3}
T_h=T_{u\overline{g}+fu+\overline{g}\overline{v}}+T_{v^*({\overline{\cdot}}-\partial_\cdot) f(\cdot)}.
\end{equation}
By the definition of the Toeplitz operator,
\begin{equation}\label{E4}
T_{\varphi}T_{\psi}=T_{u\overline{g}+fu+\overline{g}\overline{v}}+T_{f}T_{\overline{v}}.
\end{equation}
It is easy to see that $B[T_fT_{\overline{v}}]=f\overline{v}.$  Then, by Lemma \ref{l1}  and Theorem \ref{l2}, we conclude that
$$\mathcal{B}[v^*(\overline{\cdot}-\partial_\cdot)f(\cdot)]=f\overline{v}.$$
It follows that $$B[T_fT_{\overline{v}}]=\mathcal{B}[v^*(\overline{\cdot}-\partial_\cdot)f(\cdot)]=f\overline{v},$$
which implies that $$T_fT_{\overline{v}}=T_{v^*({\overline{\cdot}}-\partial_\cdot) f(\cdot)}.$$
This fact with (\ref{E3}-\ref{E4}) shows that
$$T_{\varphi}T_{\psi}=T_h.$$ This completes the proof.
\end{proof}
As an immediate corollary, we now obtain (b) in Theorem \ref{TT2}.
\begin{cor}\label{c1}
Write
$$\varphi=f+\overline{g}\quad\text{and}\quad\psi=u+\overline{v}$$
where $f,g,u,v\in \text{Sym}(\mathbb{C}^n)\cap H(\mathbb{C}^n)$.
Then $T_\varphi T_\psi=0$ if and only if
\begin{equation*}
u\overline{g}+fu+\overline{g}\overline{v}+v^*({\overline{\cdot}}-\partial_\cdot) f(\cdot)=0.
\end{equation*}
\end{cor}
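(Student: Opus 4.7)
The plan is to obtain this corollary as an immediate specialization of Theorem \ref{t1} with $h = 0$. First I would verify that $h = 0$ is an admissible choice in the hypothesis of Theorem \ref{t1}, which is trivial since the zero function lies in $L^2(\mathbb{C}^n, d\mu)$. Next I would observe that the zero operator on $F^2$ coincides with the Toeplitz operator $T_0$, since $T_0 = P M_0 = 0$. With these two observations in place, the equivalence in Theorem \ref{t1} reads: $T_\varphi T_\psi = 0 = T_0$ if and only if
$$0 = u\overline{g} + fu + \overline{g}\overline{v} + v^*(\overline{\cdot} - \partial_\cdot) f(\cdot),$$
which is exactly the claimed statement.

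There is no genuine obstacle to overcome here. The substantive work has already been carried out in Theorem \ref{t1}, where Lemma \ref{l1} and Theorem \ref{l2} were used to identify $v^*(\overline{\cdot} - \partial_\cdot) f(\cdot)$ as the function whose Berezin transform equals $f\overline{v}$, and hence as the symbol representing $T_f T_{\overline{v}}$ as a single Toeplitz operator. Both directions of the equivalence in Theorem \ref{t1} were established without using that $h$ is nonzero, so they apply verbatim when $h = 0$, and nothing further needs to be checked. Thus the corollary follows with no additional argument beyond invoking Theorem \ref{t1}.
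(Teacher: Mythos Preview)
Your proposal is correct and mirrors the paper's own proof exactly: the paper simply sets $h=0$ in Theorem~\ref{t1} to obtain the corollary. Your additional remarks that $0\in L^2(\mathbb{C}^n,d\mu)$ and $T_0=0$ make the specialization explicit but add nothing beyond what the paper records in a single line.
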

\begin{proof}
We put $h=0$ in Theorem \ref{t1}, then we get the result.
\end{proof}

Setting $g=0$ and $u=0$ in Theorem \ref{t1},   we obtain  Corollary \ref{c2}.
So we omit the proof of Corollary \ref{c2} here.
We can now prove the Corollary \ref{c3}, which demonstrates that the assertion regarding the zero product of Toeplitz operators is  true.
\begin{theorem}\label{t2}
Given $f,g,u,v\in \mathrm{Sym}(\mathbb{C}^n)\cap H(\mathbb{C}^n).$ Write $$\varphi=f+\overline{g}\quad\text{and}\quad \psi=u+\overline{v}.$$ Then $T_\varphi T_\psi=0$ if and only if
 $\varphi=0$ or $\psi=0$.
\end{theorem}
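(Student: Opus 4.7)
The converse implication in Theorem \ref{t2} is immediate (if $\varphi=0$ or $\psi=0$ then $T_\varphi T_\psi=0$), so all the content is in the forward direction. My plan is to combine Corollary \ref{c1} with a polarization trick to obtain a bivariate identity that is much easier to analyze. Corollary \ref{c1} gives $T_\varphi T_\psi=0$ if and only if
$$u\overline{g}+fu+\overline{g}\,\overline{v}+v^*(\overline{z}-\partial_z)f(z)=0. \qquad(\sharp)$$
Applying $T_\varphi T_\psi$ to the reproducing kernel $K_w$ and using the elementary identities $T_u K_w=u K_w$, $T_{\overline v}K_w=\overline{v(w)}K_w$, together with $T_{\overline g}(hK_w)(z)=K_w(z)\cdot g^*(\overline w+\partial_z)h(z)$ for holomorphic $h$ (which follows from $T_{\overline g}=g^*(\partial_z)$ and the Leibniz rule), and setting $\eta=\overline w\in\mathbb{C}^n$, one obtains the equivalent polarized identity
$$(f(z)+g^*(\eta))(u(z)+v^*(\eta))\;=\;g^*(\eta)\,u(z)-g^*(\eta+\partial_z)\,u(z) \qquad(\dagger)$$
valid for all $(z,\eta)\in\mathbb{C}^n\times\mathbb{C}^n$.

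The next step is a case analysis. If either $u$ or $g$ is constant, then $g^*(\eta+\partial_z)u=g^*(\eta)u$ (in the first case because $\partial^m u=0$ for every $|m|\geq 1$, in the second because $g^{*(m)}\equiv 0$ for every $m\neq 0$), so the right side of $(\dagger)$ vanishes and $(\dagger)$ reduces to the pointwise identity $(f+g^*(\eta))(u+v^*(\eta))\equiv 0$ on $\mathbb{C}^{2n}$. Since entire functions on $\mathbb{C}^{2n}$ form an integral domain, one of the two factors vanishes identically. An identity $f(z)+g^*(\eta)\equiv 0$ forces $f$ and $g^*$ both to be constants summing to zero, which rearranges to $\varphi\equiv 0$; analogously $u(z)+v^*(\eta)\equiv 0$ yields $\psi\equiv 0$.

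The main obstacle is the case where both $u$ and $g$ are non-constant. Here my plan is to expand $(\dagger)$ as a Taylor series in $\eta$ at $\eta=0$: the coefficient of $\eta^k$ for each $k\neq 0$ yields the explicit equation
$$g^{*(k)}(\partial_z)\,u(z)\;=\;-k!\,\overline{b_k}\,f(z)-k!\,\beta_k, \qquad \beta_k=\sum_{k_1+k_2=k}\overline{c_{k_1}b_{k_2}},$$
which says that a specific linear combination of derivatives of $u$ must be affine in $f$. In the polynomial case, comparing $\eta$-degrees of $(\dagger)$ forces a contradiction: the left side has $\eta$-degree $N_g+N_v$ coming from $g^*(\eta)v^*(\eta)$, while the right side has $\eta$-degree at most $N_g-1$, and the top coefficient of the left side is the nonzero product of leading homogeneous parts of $g^*$ and $v^*$ in the integral domain $\mathbb{C}[\eta]$. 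This rules out $v$ non-constant, and if $v$ is constant the comparison at $\eta$-degree $N_g$ then forces $u(z)\equiv-\overline{v(0)}$, contradicting the non-constancy of $u$. For general $f,g,u,v\in\mathrm{Sym}(\mathbb{C}^n)\cap H(\mathbb{C}^n)$ I would extend this argument by treating $(\dagger)$ as an entire identity in $\eta$ and exploiting the integral-domain structure of the algebra of (possibly infinite-order) differential operators with coefficients in $\mathrm{Sym}(\mathbb{C}^n)\cap H(\mathbb{C}^n)$; justifying this extension rigorously, so that the product factorization $(M_f+g^*(\partial_z))(M_u+v^*(\partial_z))=0$ really forces one factor to vanish, is the most delicate part of the proof.
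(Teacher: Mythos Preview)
Your derivation of the polarized identity $(\dagger)$ is correct and the case analysis when $u$ or $g$ is constant is clean. The polynomial argument comparing $\eta$-degrees is also valid. The genuine gap is exactly where you flag it: for general $f,g,u,v\in\mathrm{Sym}(\mathbb{C}^n)\cap H(\mathbb{C}^n)$ with both $g$ and $u$ non-constant, you do not actually prove anything. The appeal to an ``integral-domain structure of the algebra of (possibly infinite-order) differential operators'' is not a proof: it is not clear that the operators $M_f+g^*(\partial_z)$ for $f,g\in\mathrm{Sym}\cap H$ even form an algebra (composition requires growth control you have not established), and no domain property for such an algebra is available in the literature. A degree argument has no obvious replacement for transcendental entire symbols, and the Taylor-in-$\eta$ coefficients you extract give infinitely many relations whose joint analysis you have not carried out.

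The paper closes this gap by a completely different device. Instead of working with the full bivariate identity, it complexifies Corollary~\ref{c1} and then \emph{specializes} to $w=0$, obtaining
\[
u(z)\overline{g(0)}+f(z)u(z)+\overline{g(0)v(0)}+v^*(-\partial_z)f(z)=0.
\]
The key observation is that $v^*(-\partial_z)f=T_{\overline{v(-\cdot)}}f$. One now multiplies by $T_{K_\eta}$ and uses the product formula $T_{K_\eta}T_{\overline{v(-\cdot)}}=T_{\overline{v(\eta-\cdot)}K_\eta}$ (a consequence of Corollary~\ref{c2} and Proposition~\ref{p2}); pairing against $K_\eta$ and invoking injectivity of the Berezin transform yields the \emph{purely algebraic} identity
\[
\overline{v(\eta-z)}\,f(z)+u(z)\overline{g(0)}+u(z)f(z)+\overline{g(0)v(0)}=0\qquad(z,\eta\in\mathbb{C}^n).
\]
Here no differential operators remain: since only the first term depends on $\eta$, either $f\equiv 0$ or $v$ is constant, and both branches are then elementary. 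This trick of trading the infinite-order operator $v^*(-\partial_z)$ for a Toeplitz operator and then ``undoing'' it via $T_{K_\eta}$ is precisely the missing idea in your argument; your degree comparison is a special case that cannot be pushed through for transcendental symbols.
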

\begin{proof}
The sufficiency is clear, we now prove the necessity.
Since $T_\varphi T_\psi=0$, then
$$u\overline{g}+fu+\overline{g}\overline{v}+v^*({\overline{\cdot}}-\partial_\cdot) f(\cdot)=0$$
by Corollary \ref{c1}. We will complexify the above equation to get
\begin{align}\label{EEE1}
u(z)g^*(w)+f(z)u(z)+g^*(w)v^*(w)+v^*(w-\partial_z) f(z)=0.
\end{align}
Setting $w=0$ in (\ref{EEE1}), we have
\begin{align}\label{EEE2}
u(z)g^*(0)+f(z)u(z)+g^*(0)v^*(0)+v^*(-\partial_z) f(z)=0.
\end{align}
Let $v(z)=\sum_\alpha b_\alpha z^\alpha.$ It is easy to check that
\begin{align*}
v^*(-\partial_z)f(z)&=\sum_{\alpha}\overline{b_\alpha}(-1)^{|\alpha|}\partial_z^\alpha\langle f, K_z(w)\rangle\\
&=\sum_{\alpha}\overline{b_\alpha}(-1)^{|\alpha|}\langle \overline{w}^\alpha f(w),K_z(w)\rangle\\
&=\langle \overline{v(-w)}f(w), K_z(w)\rangle\\
&=T_{\overline{v(-z)}}f(z).
\end{align*}
This together with (\ref{EEE2}) shows that
\begin{align*}
u(z)g^*(0)+f(z)u(z)+g^*(0)v^*(0)+T_{\overline{v(-z)}}f(z)=0.
\end{align*}
 That is,
\begin{align}\label{EEE3}
T_{\overline{v(-z)}}f(z)+u(z)\overline{g(0)}+u(z)f(z)+\overline{g(0)v(0)}=0.
\end{align}

For any $\eta\in \mathbb{C}$, we have
\begin{align*}
T_{K_\eta(z)}T_{\overline{v(-z)}}f(z)+K_\eta(z)\bigg(u(z)\overline{g(0)}+u(z)f(z)+\overline{g(0)v(0)}\bigg)=0.
\end{align*}
Using Corollary \ref{c2}, Proposition \ref{p2} and its remark, we obtain
$$T_{K_\eta(z)}T_{\overline{v(-z)}}=T_{\overline{v(\eta-z)}K_\eta(z)}.$$ This clearly implies
\begin{align*}
T_{\overline{v(\eta-z)}f(z)}K_\eta(z) +K_\eta(z)\bigg(u(z)\overline{g(0)}+u(z)f(z)+\overline{g(0)v(0)}\bigg)=0.
\end{align*}
Consequently,
\begin{align*}
0&=\langle T_{\overline{v(\eta-z)}f(z)}K_\eta(z),K_\eta(z)\rangle+
\langle K_\eta(z)\bigg(u(z)\overline{g(0)}+u(z)f(z)+\overline{g(0)v(0)}\bigg),K_\eta(z)\rangle\\
&=\bigg\langle \bigg(\overline{v(\eta-z)}f(z)+u(z)\overline{g(0)}+u(z)f(z)+\overline{g(0)v(0)}\bigg)K_\eta(z)  ,K_\eta(z)\bigg\rangle\\
&=e^{|\eta|^2}\mathcal{B}[\overline{v(\eta-z)}f(z)+u(z)\overline{g(0)}+u(z)f(z)+\overline{g(0)v(0)}](\eta).
\end{align*}
Using the linearity and injectivity of the Berezin transform, we conclude that
$$\overline{v(\eta-z)}f(z)+u(z)\overline{g(0)}+u(z)f(z)+\overline{g(0)v(0)}=0$$
for $\eta\in \mathbb{C}$. The above equation implies that $\overline{v(\eta-z)}$ is a constant or
$f=0.$

Assume $f=0$. Then $\varphi\psi=0$ since $T_\varphi T_\psi=0$ and $\varphi=\overline{g}$.
If $\overline{v(\eta-z)}$ is a constant, then $\overline{v(\eta-z)}=\overline{v(\eta)}.$
Since $\eta$ is arbitrary, so $v$ is a constant. It is easy to see that
$$\mathcal{B}[T_{f+\overline{g}}T_{u+c}]=(f+\overline{g})(u+c)$$
for $\overline{v}=c.$
Thus, $\varphi\psi=0$ since $T_\varphi T_\psi=0$ and $\psi=u+c$. This completes the proof of the
theorem.
\end{proof}

In fact, using Proposition \ref{p2}, it is easy to see that
\begin{equation}\label{E18}
T_{e^{z\cdot I_1}}T_{\overline{v(z)}}=T_{\overline{v(z-I_1)}e^{z\cdot I_1}}.
\end{equation}
 Next, we aim to illustrate the distinction between the Brown-Halmos theorems on the Bergman and Fock space using examples derived from (\ref{E18}).  
\begin{prop}\label{l3}
Suppose that $v(z)=e^{(z+I_1)\cdot{I_1}}$ and $f(z)=e^{z\cdot I_1}$. Then $$T_{f(z)}T_{\overline{v(z)}}=T_{e^{z\cdot I_1+\overline{z}\cdot{I_1}}}\neq T_{f(z)\overline{v(z)}}$$ on the Fock space.
\end{prop}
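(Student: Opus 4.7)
The approach is to apply the identity (\ref{E18}) directly with the chosen $v$, compute the resulting symbol, and then separately compute $f\overline{v}$ to see that the two differ by a multiplicative constant that prevents equality of the corresponding Toeplitz operators.

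First I would rewrite $v$ in a convenient form: since $(z+I_1)\cdot I_1 = z\cdot I_1 + |I_1|^2 = z\cdot I_1 + n$, we have $v(z)=e^{n}e^{z\cdot I_1}$. Substituting $z \mapsto z-I_1$ gives $v(z-I_1)=e^{n}e^{(z-I_1)\cdot I_1}=e^{n}e^{z\cdot I_1 - n}=e^{z\cdot I_1}$, and therefore $\overline{v(z-I_1)}=e^{\overline{z}\cdot I_1}$ (the entries of $I_1$ being real). Plugging this into (\ref{E18}) with $f(z)=e^{z\cdot I_1}$ yields
\begin{equation*}
T_{f(z)}T_{\overline{v(z)}}=T_{\overline{v(z-I_1)}\,e^{z\cdot I_1}}=T_{e^{z\cdot I_1+\overline{z}\cdot I_1}},
\end{equation*}
which establishes the equality claimed in the proposition.

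Next I would compute $f\overline{v}$ directly:
\begin{equation*}
f(z)\overline{v(z)}=e^{z\cdot I_1}\,e^{\overline{z}\cdot I_1+n}=e^{n}\,e^{z\cdot I_1+\overline{z}\cdot I_1}.
\end{equation*}
By linearity of the Toeplitz construction this gives $T_{f\overline{v}}=e^{n}\,T_{e^{z\cdot I_1+\overline{z}\cdot I_1}}$. If we had $T_{f\overline{v}}=T_{e^{z\cdot I_1+\overline{z}\cdot I_1}}$, then $(e^{n}-1)T_{e^{z\cdot I_1+\overline{z}\cdot I_1}}=0$, and since $e^{n}\neq 1$ for $n\geq 1$ this would force $T_{e^{z\cdot I_1+\overline{z}\cdot I_1}}=0$.

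The main (and only) remaining step is to rule this out, but it is essentially immediate: the Berezin transform of $T_{e^{z\cdot I_1+\overline{z}\cdot I_1}}$ at the origin equals
\begin{equation*}
\mathcal{B}\bigl[e^{z\cdot I_1+\overline{z}\cdot I_1}\bigr](0)=\int_{\mathbb{C}^n}e^{z\cdot I_1+\overline{z}\cdot I_1}\,d\mu(z)>0,
\end{equation*}
since the integrand is strictly positive. Hence $T_{e^{z\cdot I_1+\overline{z}\cdot I_1}}\neq 0$, contradicting the equality, and the proof is complete. I don't expect any real obstacle here; the proposition is essentially a worked example designed to highlight that (\ref{E18}) introduces a constant factor that cannot be absorbed, so the content is bookkeeping once one recognizes that $v(z-I_1)$ loses the exponential factor $e^{|I_1|^2}$ present in $v(z)$.
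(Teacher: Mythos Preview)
Your proof is correct. The paper, however, does not invoke (\ref{E18}); instead it verifies the equality $T_{f}T_{\overline{v}}=T_{e^{z\cdot I_1+\overline{z}\cdot I_1}}$ by a direct, self-contained computation on the monomial basis: it evaluates $T_{\overline{v}}z^l=\langle w^l,e^{w\cdot(I_1+\overline{z})+n}\rangle=(z+I_1)^l e^{n}$, then multiplies by $f$ to get $(z+I_1)^l e^{(z+I_1)\cdot I_1}$, and checks that $T_{e^{z\cdot I_1+\overline{z}\cdot I_1}}z^l$ gives the same answer. Your route is shorter and conceptually cleaner---indeed it is exactly what the sentence preceding the proposition advertises (``examples derived from (\ref{E18})'')---but it relies on the machinery behind (\ref{E18}), namely Corollary~\ref{c2} and Proposition~\ref{p2}. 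The paper's argument has the minor advantage of being an independent check that does not rest on that earlier theory. For the inequality, the paper simply notes $f\overline{v}=e^{n}e^{z\cdot I_1+\overline{z}\cdot I_1}\neq e^{z\cdot I_1+\overline{z}\cdot I_1}$ and implicitly appeals to injectivity of the symbol map; your explicit use of the Berezin transform at $0$ to rule out $T_{e^{z\cdot I_1+\overline{z}\cdot I_1}}=0$ makes the same point with a little more care.
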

\begin{proof}
For any multi-index $l$ with $|l|\geq 1,$  we have
\begin{align*}
T_{\overline{v(z)}}z^l&=\langle w^l,e^{(w\cdot I_1+w\cdot \overline{z}+n}\rangle\\
&=\langle w^l,e^{(w\cdot ( I_1+ \overline{z})+n}\rangle\\
&=( I_1+ z)^l e^n.
\end{align*}
Here, we use the fact that $I_1\cdot I_1=n$.
It follows that
\begin{equation}\label{E10}
T_{f(z)}T_{\overline{v(z)}}z^l=( I_1+ z)^l e^{n+z\cdot I_1}=( I_1+ z)^l e^{(z+I_1)\cdot I_1}.
\end{equation}
A similar argument shows that $$T_{e^{z\cdot I_1+\overline{z}\cdot{I_1}}}z^l=( I_1+ z)^l e^{(z+I_1)\cdot I_1}.$$
This together with (\ref{E10}) yields $T_{f(z)}T_{\overline{v(z)}}=T_{e^{z\cdot I_1+\overline{z}\cdot{I_1}}}$. On the other hand, it is easy to check that
$$e^{z\cdot I_1}e^{(\overline{z}+I_1)\cdot{I_1}}=e^{z\cdot I_1+\overline{z}\cdot I_1+n}\neq e^{z\cdot
I_1+\overline{z}\cdot{I_1}},$$ which shows that $h\neq f\overline{v}.$ The proof is complete.
\end{proof}
From Proposition \ref{l3}, we obtain the following result which shows that
there are functions $\varphi,\psi,h$ so that $$T_\varphi T_\psi=T_h\neq T_{\varphi\psi},$$
where $\varphi$ and $\psi$ are non-pluriharmonic functions.
 This characterization suggests that there are extreme cases for Fock space.
%The following proposition demonstrates that the assertion regarding the zero product of Toeplitz operators is not true.
\begin{prop}\label{p1}
Let $f(z)=e^{z\cdot I_1}$. Suppose that $h,v,g$ are functions 
in  $H(\mathbb{C}^n)$ and $v\in \mathrm{Sym}(\mathbb{C}^n)$ .
Write  \begin{equation*}
k(z)=\overline{h(z)}\overline{v(z-I_1)}f(z)g(z)
\end{equation*}
 Then $T_{\overline{h}f}T_{\overline{v}g}=T_k.$
\end{prop}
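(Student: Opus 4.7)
The strategy is to decompose $T_{\overline{h}f}\,T_{\overline{v}g}$ into a product of four elementary Toeplitz operators, apply the identity (\ref{E18}) on the middle factor, and then reassemble. The key algebraic tool is a pair of factorization rules: for any symbol $\phi$ and any holomorphic $p$ one has $T_\phi T_p = T_{\phi p}$, because $T_p$ acts as pointwise multiplication by $p$ on $F^2$; dually, $T_{\overline{p}} T_{\overline{q}} = T_{\overline{pq}}$ for holomorphic $p,q$, obtained from the first by taking adjoints and using $T_p^{*} = T_{\overline{p}}$.

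Applying these rules to each factor gives $T_{\overline{h}f} = T_{\overline{h}} T_f$ and $T_{\overline{v}g} = T_{\overline{v}} T_g$, so that
\begin{equation*}
T_{\overline{h}f}\, T_{\overline{v}g} = T_{\overline{h}} T_f T_{\overline{v}} T_g.
\end{equation*}
Now (\ref{E18}) substitutes $T_f T_{\overline{v}} = T_{\overline{v(z-I_1)}\,f}$, and a further application of the first factorization rule splits this into $T_{\overline{v(z-I_1)}} T_f$. The product thus becomes $T_{\overline{h}} T_{\overline{v(z-I_1)}} T_f T_g$. Merging the two co-analytic factors via the dual rule produces $T_{\overline{h}\,\overline{v(z-I_1)}}$, and merging the two analytic factors gives $T_f T_g = T_{fg}$. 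A final application of $T_\phi T_p = T_{\phi p}$, with $\phi=\overline{h}\,\overline{v(z-I_1)}$ and $p=fg$, collapses everything to $T_{\overline{h}\,\overline{v(z-I_1)}\,f g} = T_k$.

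The main technical obstacle is to justify the chain of factorizations on a suitable common dense domain, for instance on holomorphic polynomials: one needs to verify $T_\phi T_p q = P(\phi p q) = T_{\phi p} q$ for polynomial $q$, which requires that $pq \in F^2$ and that the subsequent projections converge, despite $f(z)=e^{z\cdot I_1}$ being unbounded. This follows from the fact that $f \in \mathrm{Sym}(\mathbb{C}^n)$ and, since $|f(z)|^2 e^{-|z|^2} = e^{-|z-I_1|^2 + n}$, multiplication by $f$ preserves polynomials in $F^2$. One also needs $v(\,\cdot\, - I_1) \in \mathrm{Sym}(\mathbb{C}^n)$ so that (\ref{E18}) may be invoked, which is immediate from the translation invariance of the growth condition defining the symbol class.
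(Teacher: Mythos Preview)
Your proof is correct and follows essentially the same route as the paper: factor $T_{\overline{h}f}T_{\overline{v}g}=T_{\overline{h}}T_fT_{\overline{v}}T_g$, replace the middle product via the identity (\ref{E18}), and reassemble using the standard rules $T_\phi T_p=T_{\phi p}$ and $T_{\overline{p}}T_\phi=T_{\overline{p}\phi}$ for holomorphic $p$. Your extra care about the common dense domain of polynomials and the translation invariance of $\mathrm{Sym}(\mathbb{C}^n)$ goes somewhat beyond what the paper spells out, but the argument is the same.
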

\begin{proof}
By the definition of the Toeplitz operator, 
$$T_{\overline{h}f}T_{\overline{v}g}=T_{\overline{h}}T_{f}T_{\overline{v}}T_{g}.$$
It follows from Proposition \ref{l3} that 
$$T_{f}T_{\overline{v}}=T_{\overline{v(z-I_1)}f(z)},$$
which means that 
$$T_{\overline{h(z)}}T_{f(z)}T_{\overline{v(z)}}T_{g(z)}
=T_{\overline{h(z)}\overline{v(z-I_1)}f(z)g(z)}.$$  
This yields $T_{\overline{h}f}T_{\overline{v}g}=T_k,$ as required.
\end{proof}
%\begin{remark}
%Let $g(z)=e^{z\cdot I_1}$ and $h=0$. It follows from (\ref{E12}) and Proposition \ref{p1} that
%$$u(z)(e^{z\cdot I_1}+e^{\overline{z\cdot I_1}})+e^{\overline{z}\cdot I_1+\overline{z\cdot I_1}+n}+e^{\overline{z}\cdot{I_1}+z\cdot I_1}=0.$$  So can obtain
%\begin{equation}\label{E15}
%u(z)(e^{z\cdot I_1}+e^{\overline{z\cdot I_1}})=-(e^{n+\overline{z}\cdot{I_1}}+e^{z\cdot{I_1}})e^{\overline{z}\cdot{I_1}}.
%\end{equation}
%If we set $n=1$, (\ref{E15}) becomes
%$$u(z)(e^{z}+e^{\overline{z}})=-(e^{1+\overline{z}}+e^{z})e^{\overline{z}}.$$
%Clearly, $u(z)$ is a non-constant function, since
%$$u(t)=-\frac{1}{2}e^t(e+1)$$ for $t\in \mathbb{R}.$ But, the equation being still far from its solution.
%\end{remark}

Next, we consider a pair of symbol functions $\varphi,\psi\in \mathrm{Sym}(\mathbb{C}^n)$ of the form
$$\varphi=f+\overline{g}\quad\text{and}\quad \psi=u+\overline{v}$$
where $f,g,u,v\in \mathrm{Sym}(\mathbb{C}^n)\cap H(\mathbb{C}^n).$
A direct calculation yields
\begin{equation}\label{E17}
\mathcal{B}\bigg[[T_\varphi,T_\psi]\bigg]=\mathcal{B}[u\overline{g}-f\overline{v}]+
f\overline{v}-u\overline{g}.
\end{equation}
It is clear that $[T_\varphi,T_\psi]=0$ if and only if $\mathcal{B}[u\overline{g}-f\overline{v}]=u\overline{g}-f\overline{v}.$ We now pause to characterize commuting Toeplitz operators with pluriharmonic symbols, which shows Theorem \ref{TT3}.
\begin{theorem}\label{t3}
Given $f,g,u,v\in \text{Sym}(\mathbb{C}^n)\cap H(\mathbb{C}^n).$ Then $[T_{f+\overline{g}},T_{u+\overline{v}}]=0$ if and only if the following equation  is fulfilled:
\begin{equation*}
u\overline{g}-f\overline{v}=g^*({\overline{\cdot}}-\partial_\cdot) u(\cdot)-v^*({\overline{\cdot}}-\partial_\cdot) f(\cdot).
\end{equation*}
\end{theorem}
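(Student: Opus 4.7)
The plan is to reduce Theorem \ref{t3} directly to Theorem \ref{l2} via the observation already recorded in Equation (\ref{E17}). That equation shows
$$\mathcal{B}\bigl[[T_\varphi,T_\psi]\bigr]=\mathcal{B}[u\overline{g}-f\overline{v}]-(u\overline{g}-f\overline{v}),$$
so the commutator vanishes if and only if the function $w:=u\overline{g}-f\overline{v}$ is a fixed point of the Berezin transform. Once this is in hand, the theorem is really a statement about when this Berezin fixed-point equation is equivalent to the explicit formula on the right.

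The first preliminary step is to check that $w\in L^{2}(\mathbb{C}^n,d\mu)$. Since $f,g,u,v\in\mathrm{Sym}(\mathbb{C}^n)\cap H(\mathbb{C}^n)$, both $u\overline{g}$ and $f\overline{v}$ lie in $L^{2}(\mathbb{C}^n,d\mu)$ by (\ref{E14}), so $w$ does as well. This makes Theorem \ref{l2} applicable to $w$ and to the two-term right-hand side $u\overline{g}+(-f)\overline{v}$, which is a sum of the form $\sum_{l=1}^{2} f_l\overline{g_l}$ with $f_1=u,\; g_1=g,\; f_2=-f,\; g_2=v$, all in $H(\mathbb{C}^n)\cap\mathrm{Sym}(\mathbb{C}^n)$.

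For the forward direction, assume $[T_\varphi,T_\psi]=0$. Then $\mathcal{B}[w]=u\overline{g}-f\overline{v}$ by (\ref{E17}), and Theorem \ref{l2} identifies the unique $L^2$ solution of this Berezin equation as
$$w=g^*(\overline{\cdot}-\partial_\cdot)u(\cdot)-v^*(\overline{\cdot}-\partial_\cdot)f(\cdot),$$
which is precisely (\ref{E16}). For the converse, suppose (\ref{E16}) holds. Apply $\mathcal{B}$ to both sides and use the identity established inside the proof of Theorem \ref{l2}, namely $\mathcal{B}\bigl[g^*(\overline{z}-\partial_z)f(z)\bigr]=f\overline{g}$, applied to each summand. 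This yields $\mathcal{B}[w]=u\overline{g}-f\overline{v}=w$, so (\ref{E17}) gives $\mathcal{B}\bigl[[T_\varphi,T_\psi]\bigr]=0$, and injectivity of the Berezin transform on bounded operators (equivalently, testing against the normalized kernels $k_z$) forces $[T_\varphi,T_\psi]=0$.

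There is essentially no analytical obstacle here: the whole argument is a bookkeeping exercise showing that (a), (b) and (c) each translate into the same Berezin fixed-point condition, modulo the uniqueness statement of Theorem \ref{l2}. The only small care to take is to record the $L^2$ membership of $w$ and of the expression on the right of (\ref{E16}), so that Theorem \ref{l2}'s uniqueness applies on the nose in both directions.
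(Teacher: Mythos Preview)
Your proposal is correct and follows essentially the same approach as the paper's proof: both reduce the commutator vanishing, via (\ref{E17}), to the Berezin fixed-point condition $\mathcal{B}[u\overline{g}-f\overline{v}]=u\overline{g}-f\overline{v}$, and then invoke Theorem~\ref{l2} (equivalently Theorem~\ref{TT1}) in both directions. Your version is slightly more explicit in recording the $L^2$ membership of $w$ and in spelling out the use of injectivity of the Berezin transform for the converse, but the argument is otherwise identical to the paper's.
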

\begin{proof}
From (\ref{E17}), $[T_{f+\overline{g}},T_{u+\overline{v}}]=0$ implies that $u\overline{g}-f\overline{v}$ is a fixed point of the Berezin transform. This fact with Theorem \ref{TT1} shows that
$$u\overline{g}-f\overline{v}=g^*({\overline{\cdot}}-\partial_\cdot) u(\cdot)-v^*({\overline{\cdot}}-\partial_\cdot) f(\cdot).$$

Conversely, assume the equation (\ref{E16}) holds. By Theorem \ref{TT1} again, on can see that
$$\mathcal{B}[u\overline{g}-f\overline{v}]=u\overline{g}-f\overline{v}.$$ This gives
$[T_{f+\overline{g}},T_{u+\overline{v}}]=0$, which completes the proof.
\end{proof}
Recall that $$I_{n-1}^*=(1,\ldots,1,0)\quad\text{and}\quad I_n=I_1-I_{n-1}^*=(0,\cdots,0,1)\in \mathbb{C}^n.$$ It is easy to see that $I^*_{n-1}\cdot I_n=0.$ We now apply the above theorem to  prove the Corollary \ref{c4}.
\begin{cor}
For $n\geq2$. Let $p$ be a holomorphic polynomial  such that $$p(z)=p(z\cdot I_{n-1}^*).$$ Suppose that $f(z)=e^{2\pi \mathrm{i}z_n}$ and $g(z)=e^{z_n}$. Then $T_{pf}T_{\overline{g}}=T_{pf\overline{g}}$.
\end{cor}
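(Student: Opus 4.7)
The plan is to apply Corollary \ref{c2} with $N=1$, $f_1 = pf$, and $g_1 = g$, which rewrites
$$T_{pf}T_{\overline{g}} = T_{g^*(\overline{\cdot} - \partial_\cdot)(pf)(\cdot)}.$$
Thus the corollary reduces to the pointwise identity
$$g^*(\overline{z} - \partial_z)(pf)(z) = p(z) f(z)\,\overline{g(z)},$$
which says that the symbol produced by the differential operator on the right collapses to $pf\overline{g}$, giving the claimed $T_{pf\overline{g}}$.

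First I would unpack $g^*$. Since $g(z) = e^{z_n}$ depends only on $z_n$, its Taylor series involves only multi-indices of the form $\alpha = kI_n$, and $g^*(w) = \overline{g(\overline{w})} = e^{w_n}$. Consequently $g^*(\overline{z} - \partial_z)$ acts only in the variable $z_n$ and is, formally, the one-variable exponential operator $\sum_{k \geq 0} \frac{1}{k!}(\overline{z_n} - \partial_{z_n})^k$.

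The main computation is then to exploit the hypothesis $p(z) = p(z \cdot I_{n-1}^*)$, which forces $\partial_{z_n} p = 0$. Since multiplication by $\overline{z_n}$ commutes with $\partial_{z_n}$ on holomorphic functions (because $\partial_{z_n} \overline{z_n} = 0$), the binomial theorem yields
$$(\overline{z_n} - \partial_{z_n})^k (pf) = p(z)\,(\overline{z_n} - 2\pi\mathrm{i})^k f(z),$$
using $\partial_{z_n}^m f = (2\pi\mathrm{i})^m f$ for $f(z) = e^{2\pi\mathrm{i} z_n}$. Summing over $k$ then gives
$$g^*(\overline{z} - \partial_z)(pf)(z) = p(z)f(z)\,e^{\overline{z_n} - 2\pi\mathrm{i}} = p(z)f(z)\,e^{\overline{z_n}} = p(z)f(z)\,\overline{g(z)},$$
where the crucial cancellation uses $e^{-2\pi\mathrm{i}} = 1$; this resonance is precisely why the factor $2\pi\mathrm{i}$ in the exponent of $f$ matters.

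The only subtlety, and it is mild, is verifying that the formal manipulation of $g^*(\overline{z} - \partial_z)$ as the one-variable exponential operator $e^{\overline{z_n} - \partial_{z_n}}$ is legitimate. Because the nontrivial action occurs only in the single variable $z_n$ and is applied to $pf$ (a polynomial in $z_1,\dots,z_{n-1}$ times a one-variable exponential in $z_n$), each $(\overline{z_n} - \partial_{z_n})^k(pf)$ is a concrete function and the resulting series converges absolutely on compacta. Membership of the output in the relevant symbol class is already ensured by Theorem \ref{l2}, so the pointwise identity above immediately delivers $T_{pf}T_{\overline{g}} = T_{pf\overline{g}}$.
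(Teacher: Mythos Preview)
Your argument is correct. You invoke Corollary~\ref{c2} to write $T_{pf}T_{\overline{g}}=T_{g^*(\overline{\cdot}-\partial_\cdot)(pf)}$ and then compute the differential operator explicitly: since $g$ depends only on $z_n$ and $p$ is independent of $z_n$, the operator $e^{\overline{z_n}-\partial_{z_n}}$ passes through $p$ and acts on $f=e^{2\pi\mathrm{i}z_n}$ as multiplication by $e^{\overline{z_n}-2\pi\mathrm{i}}=e^{\overline{z_n}}=\overline{g}$. The paper takes a different route: it computes the Berezin transform $\mathcal{B}[pf\overline{g}]$ directly via the reproducing kernel, obtaining $p(z+I_n)e^{2\pi\mathrm{i}(z_n+1)+\overline{z_n}}$, and then uses the same two ingredients ($p(z+I_n)=p(z)$ and $e^{2\pi\mathrm{i}}=1$) to see that $pf\overline{g}$ is a fixed point of $\mathcal{B}$, whence $T_{pf}T_{\overline{g}}=T_{pf\overline{g}}$ by injectivity. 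Your approach has the advantage of making the mechanism of Theorem~\ref{TT1}/Corollary~\ref{c2} completely transparent in this example, showing exactly how the symbol $g^*(\overline{z}-\partial_z)f$ collapses; the paper's approach is slightly more self-contained in that it bypasses the operator calculus and works only with integrals. The two computations are in fact dual under Theorem~\ref{l2}, which identifies the unique $u$ with $\mathcal{B}[u]=pf\overline{g}$ as precisely $g^*(\overline{z}-\partial_z)(pf)$.
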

\begin{proof}
We compute the Berezin transform
\begin{align*}
\mathcal{B}[pf\overline{g}](z)&=e^{-|z|^2}\langle p(w)f(w)K_z,g(w)K_z\rangle\\
&=e^{-|z|^2}\langle p(w)e^{2\pi \mathrm{i}w\cdot I_n}K_z,K_{z+I_n}\rangle.
\end{align*}
 It follows that
\begin{align*}
\mathcal{B}[pf\overline{g}](z)&=p(z+I_n)e^{2\pi \mathrm{i}(z+I_n)\cdot I_n}e^{(z+I_n)\cdot \overline{z}-|z|^2}\\
&=p(z\cdot I^*_{n-1})e^{2\pi \mathrm{i}(z_n+1)+\overline{z_n}}\\
&=p(z)f(z)\overline{g(z)}.
\end{align*}
This shows that $T_{pf}T_{\overline{g}}=T_{pf\overline{g}}$ by Theorem \ref{t3}. The proof is complete.
\end{proof}
In Corollary \ref{c4}, one can see that
$$f(z)=e^{z\cdot (2\pi \mathrm{i}I_n)}=K_{\overline{2\pi \mathrm{i}I_n}}(z) \quad\text{and}\quad g(z)=e^{z\cdot I_n}=K_{I_n}(z) .$$ In \cite[Lemma 3.4]{Bauer1}, Bauer et al. claimed that $p$ must be a constant
if Corollary \ref{c4} holds. However, we  have discovered that\cite[Lemma 3.4]{Bauer1} is not valid for $n\geq 2$.
We suspect that the authors overlooked the need to restrict $n$ to 1. So this is a clerical error. \begin{exap}
Let $n=2$. Suppose that
 $$p(z)=P(z\cdot I_1^*)=z_1\quad\text{and}\quad f(z)=e^{z\cdot (2\pi \mathrm{i}I_2)} \quad\text{and}\quad g(z)=e^{z\cdot I_2}.$$
Clearly, $p(z)=z^\beta$ with $\beta=I_2=(0,1).$
For any multi-index $\alpha$,  we have
\begin{align*}
T_{p(z)f(z)\overline{g(z)}}z^\alpha&=\langle p(w)f(w)w^\alpha , g(w)e^{w\cdot \overline{z}}\rangle\\
&=\langle p(w)f(w)w^\alpha , e^{w\cdot (\overline{z}+I_2)}\rangle\\
&=p(z+I_2)f(z+I_2)(z+I_2)^\alpha\\
&=z^{\beta} f(z)(z+I_2)^\alpha.
\end{align*}
So we obtain
\begin{align*}
T_{p(z)f(z)\overline{g(z)}}z^\alpha=p(z)f(z)(z+I_2)^\alpha.
\end{align*}
A similar argument shows that
\begin{align*}
T_{\overline{e^{z\cdot I_2}}}z^\alpha&=\langle w^\alpha, e^{w\cdot (\overline{z}+I_2)} \rangle\\
&=(z+I_2)^\alpha.
\end{align*}
It follows that $$T_{p(z)f(z)}T_{\overline{g(z)}}z^\alpha=T_{p(z)f(z)\overline{g(z)}}z^\alpha.$$
Since the holomorphic polynomials is dense in $F^2$, so we obtain $T_{pf\overline{g}}=T_{pf}T_{\overline{g}}$.
\end{exap}

\section{Open problem}\label{s4}
In this final section we collect and discuss some problems that we have not been able to solve
with the hope that they will stimulate further investigation.

First of all, we define the Fock-Sobolev space. For a fixed non-negative integer $m$, the Fock-Sobolev space $F^{2,m}$ consisting of entire functions $f$ on $\mathbb{C}^n$ such that
$$\sum_{|\alpha|\leq m} \|\partial^\alpha f\|_2<\infty,$$
where $\|\cdot\|_2$ is the norm in $F^2.$  The Fock-Sobolev space is a  Hilbert space of holomorphic functions that is closely related to the Fock space.  However, there exists a fundamental difference in the geometries of the Fock and Fock-Sobolev spaces, see \cite{Qin}.
 Thus the following question is open and quite challenging.
\begin{proA}
 Write
$$\varphi=f+\overline{g}\quad\text{and}\quad\psi=u+\overline{v}$$
where $f,g,u,v\in \mathcal{\varepsilon}(\mathbb{C}^n)\cap H(\mathbb{C}^n)$.
Suppose that $h\in L^2(\mathbb{C}^n,d\mu)$ and $m$ is a positive integer. Determine the $\varphi$ and $\psi$ for which $T_\varphi T_\varphi=T_h$  on the Fock-Sobolev space $F^{2,m}$.
\end{proA}
Another unanswered question we would like to discuss in this section are related commuting Toeplitz operaors on the Fock-Sobolev space.
\begin{proB}
Suppose that $f,g,u,v\in \varepsilon(\mathbb{C})\cap H(\mathbb{C})$ and $m$ is a positive integer. Let $\varphi=f+\overline{g}$ and $\psi=u+\overline{v}$. Determine the $\varphi$ and $\psi$ for which the commutant $[T_\varphi,T_\psi]=0$  on the Fock-Sobolev space $F^{2,m}(\mathbb{C})$.
\end{proB}
%\begin{data}
%Data sharing not applicable to this article as no datasets were generated or analysed during
%the current study.
%\end{data}
%\begin{declaration}
%This work does not have any conflicts of interest.
%\end{declaration}

\providecommand{\MR}{\relax\ifhmode\unskip\space\fi MR }
% \MRhref is called by the amsart/book/proc definition of \MR.
\providecommand{\MRhref}[2]{%
  \href{http://www.ams.org/mathscinet-getitem?mr=#1}{#2}
}
\providecommand{\href}[2]{#2}

\end{document}